\theoremstyle{plain}
	\newtheorem{theorem}{Theorem}[section]
	\newtheorem*{theorem*}{Theorem}
	\newtheorem{lemma}{Lemma}[section]
	\newtheorem{proposition}{Proposition}[section]
	\newtheorem*{proposition*}{Proposition}
\numberwithin{equation}{section}
\theoremstyle{remark}
	\newtheorem{remark}{\textbf{Remark}}[]
	\newtheorem*{example}{Example}
\theoremstyle{definition}
\newcommand{\ve}{\varepsilon}
\newcommand{\eg}{\emph{e.g. }}
\newcommand{\RR}{\mathbb{R}}
\newcommand{\NN}{\mathbb{N}}
\newcommand{\TT}{\mathbb{T}}
\newcommand{\Q}{\mathcal{Q}}
\newcommand{\R}{ {\mathbb R} }
\newcommand{\eps}{{\varepsilon}}
\newcommand{\ind}{{\mathbb I}}
\author[P.-E. Jabin]{Pierre-Emmanuel Jabin}
 \address{{\sc Pierre-Emmanuel Jabin}, CSCAMM and Dept. of Mathematics, University of Maryland,
College Park, MD 20742, USA. P.--E.~{\sc Jabin} is partially supported by NSF Grant 1312142 and by NSF Grant RNMS (Ki-Net) 1107444.}
\email{pjabin@umd.edu}
\author[T. Rey]{Thomas Rey}
\address{{\sc Thomas Rey},
Laboratoire P. Painlevé, CNRS UMR 8524, Université Lille 1,
59655 Villeneuve d'Ascq Cedex, France. RT. {\sc Rey} is partially supported by the team Inria/Rapsodi,  Labex CEMPI (ANR-11-LABX-0007-01) and NSF Grant RNMS (Ki-Net) 1107444.}
\email{thomas.rey@math.univ-lille1.fr}
\title{Hydrodynamic limit of granular gases to pressureless Euler in dimension 1}
\date{}
\begin{document}

  \begin{abstract}
We investigate the behavior of granular gases in the limit of small Knudsen number, that is very frequent collisions.   
          We deal with the strongly inelastic case, in one dimension of space and velocity. We are able to prove the convergence toward the pressureless Euler system. The proof relies on dispersive relations at the kinetic level, which leads to the so-called Oleinik property at the limit.
  \end{abstract}
  \maketitle
  
  \tableofcontents

  \section{Introduction}
   The granular gases equation is a Boltzmann-like kinetic equation describing a rarefied gas composed of macroscopic particles, interacting via energy-dissipative binary collisions (pollen flow in a fluid, or planetary rings for example). More precisely, the phase space distribution $ f^\ve(t,x,v)$ solves the equation
		\begin{equation}
		  \label{eqBoltzEPS}
		  \left\{ \begin{aligned}
			  & \frac{\partial f^\ve}{\partial t} + v \frac{\partial f^\ve }{\partial x}= \frac{1}{\varepsilon} \mathcal Q_\alpha(f^\ve,f^\ve),\\
			  & \, \\
			  & f^\ve(0,x,v) = f_\eps^{0}(x,v),
			\end{aligned} \right.
		\end{equation}
		where  $f^{0}_\eps$ is a given non negative distribution, $t \geq 0$, $v \in \RR$ and $x\in \RR$.
		The collision operator $\Q_\alpha$ is the so-called \emph{granular gases} operator (sometimes known as the inelastic Boltzmann operator), describing an energy-dissipative microscopic collision dynamics, which we will present in the following section.
		The parameter $\ve > 0$ is the scaled \emph{Knudsen} number, that is the ratio between the mean free path of particles before a collision and the length scale of observation.

As $\ve\rightarrow 0$, the frequency of collisions increases to infinity.  The \emph{particle distribution function} $f^\ve$  then formally converges towards a Dirac mass centered on the mean velocity, 
	\begin{equation}
		  \label{defMonokinetic}
		  \rho(x)\delta_0\left (v-{u}(x)\right ), \quad \forall (x,v) \in \RR\times\RR.
		\end{equation}
This is due to the energy dissipation which ensures that all particles occupying the same position in space, necessarily have the same velocity.
 
The form \eqref{defMonokinetic} of $f^\ve$ is usually called monokinetic and greatly reduces the complexity of Eq. \eqref{eqBoltzEPS}: The solution is completely described by its local hydrodynamic fields, namely its \emph{mass} $\rho \geq 0$ and its \emph{velocity} $u \in \RR$.

Before the limit $\ve\rightarrow 0$, the same \emph{macroscopic} quantities can be obtained from the distribution function $f^\ve$ by computing its first moments in velocity:
		\begin{equation}
	    \label{defMacroQuantities}
	    \rho^\ve(t,x) \, = \, \int_{\RR} f^\ve(t,x,v) \, dv, \qquad 
	    \rho^\ve(t,x) \, {u}^\ve(t,x) \, = \, \int_{\RR} f^\ve(t,x,v) \, v \, dv.
	  \end{equation}
However those quantities cannot be solved independently as they do not satisfy a closed system for $\ve>0$, instead one has by integrating Eq. \eqref{eqBoltzEPS} (see the properties of the collision operator just below)
\[
\begin{split}
&\partial_t \rho^\ve+\partial_x(\rho^\ve\,u^\ve)=0,\\
&\partial_t (\rho^\ve\,u^\ve)+\partial_x(E^\ve)=0,
\end{split}
\]
where $E^\ve=\int_\R f^\ve(t,x,v) |v|^2 \,dv$ and cannot be expressed directly in terms of $\rho^\ve$ and $u^\ve$. But at the limit $\ve\rightarrow 0$, if \eqref{defMonokinetic} holds, then one has that $E=\rho\,u^2$ and $\rho,\;u$ now satisfy the pressureless Euler dynamics
\begin{equation}
\left\{\begin{aligned}
&\partial_t \rho+\partial_x(\rho\,u)=0,\\
&\partial_t (\rho\,u)+\partial_x(\rho\,u^2)=0.
\end{aligned}\right.\label{sysSticky}
\end{equation}
This system of equation is mostly known as a model for the formation of large scale structures in the universe (\eg aggregates of galaxies) \cite{SilkSzalayZeldovich:1983}.

The purpose of this article is to justify rigorously this limit of Eq. \eqref{eqBoltzEPS} to \eqref{sysSticky}. 

Such hydrodynamic limits for collisional models have been famously investigated for {\em elastic collisions} (preserving the kinetic energy) such as the Boltzmann equation. They are connected to the rigorous derivation of Fluid Mechanics models (such as incompressible Navier-Stokes or Euler); this longstanding conjecture formulated by Hilbert was finally solved in     \cite{GolseStRaymond2, GolseStRaymond1, StRaymond}.

The inelasticity (loss of kinetic energy for each collision) leads however to a very distinct behavior and requires different techniques. In fact even classical formal techniques such as Hilbert or Chapman-Enskog expansions (see \eg \cite{CIP:94} for a mathematical introduction in the elastic case) are not applicable. The limit system for instance is very singular (see the corresponding subsection below), to the point that well posedness for \eqref{sysSticky} is only known in dimension $1$. This is the main reason why our study is limited to this one-dimensional case.

We continue this introduction by explaining more precisely the collision operator. We then present the current theory for the limit system \eqref{sysSticky} before giving the main result of the article.

  	  

	  \subsection{The Collision Operator}
	
			Let $\alpha \in [0,1]$ be the restitution coefficient of the microscopic collision process, that is the ratio of kinetic energy dissipated during a collision, in the direction of impact. This quantity can depend on the magnitude of the relative velocity before collision $|v-v_*|$ (see the book \cite{brilliantov:2004} for a long discussion of this topic).
			
			If $\alpha = 1$, no energy is dissipated, and the collision is \emph{elastic}. If $\alpha \in (0,1)$, the collision is said to be \emph{inelastic}. 
We define a strong form of the \emph{collision operator} $\mathcal Q_\alpha$ by
			\begin{align}
			  \mathcal Q_\alpha(f, g)(v) & = \int_\mathbb{R} |v-v_*| \left( \frac{\, f' \, g_*'}{\alpha^2} - f \,g_* \right) dv_*, \label{defBoltzOp}\\
			   & = \mathcal Q_\alpha^+(f,g)(v) - f(v) L(g)(v) \notag,
			\end{align}
			where we have used the usual shorthand notation $\, f' := f(v')$, $\, f_*' := f(v_*')$, $f := f(v)$, $f_* := f(v_*)$. 
In \eqref{defBoltzOp}, $\, v'$ and $\, v_*'$ are the pre-collisional velocities of two particles of given velocities $v$ and $v_*$, defined by
			\begin{equation}
			  \label{defMicroDynamicsInel}
				\left\{\begin{aligned} 
					& v' = \frac{1}{2}(v + v_*) + \frac{\alpha}{2} (v - v_*), \\
					& v_*' = \frac12 (v + v_*) - \frac{\alpha}{2} (v - v_*).
				\end{aligned} \right.
			\end{equation}
		  The operator $\mathcal Q_\alpha^+(f,g)(v)$ is usually known as the \emph{gain} term because it can be understood as the number of particles of velocity $v$ created by collisions of particles of pre-collisional velocities $v'$ and $v_*'$, whereas $f(v)L(g)(v)$ is the \emph{loss} term, modeling the loss of particles of pre-collisional velocities $v'$. 
		  
		  We can also give a weak form of the collision operator, which is compatible with sticky collisions. 
		  Let us reparametrize the post-collisional velocities $v'$ and $v_*'$ as
		  \begin{equation*}
				\left \{\begin{aligned} 
					v' & = v - \frac{1 - \alpha}{2} (v - v_*), \\
					v_*' & = v_* + \frac{1 - \alpha}{2} (v - v_*) .
				\end{aligned}\right .
			\end{equation*}
			Then we have the weak representation, for any smooth test function $\psi$,
			\begin{equation} 
			  \label{defBoltzweak}
				\int_{\RR} Q_\alpha(f,g) \, \psi(v) \, dv = \frac{1}{2 }\int_{\mathbb{R} \times \mathbb{R}} |v-v_*|f_{*} \, g \, \left(\psi' + \psi_*' - \psi - \psi_* \right) dv \, dv_*.
			\end{equation}

			Thanks to this expression, we can compute the macroscopic properties of the collision operator $\Q_\alpha$. Indeed, we have the microscopic conservation of impulsion and dissipation of kinetic energy:
			\begin{align*}
				v' + v_*' & = v + v_*, \\
				(v')^2 + (v_*')^2 - v^2 - v_*^2 & = - \frac{1-\alpha^2}{2} ( v-v_* )^2 \leq 0.
			\end{align*}
			Then if we integrate the collision operator against $\varphi(v) = (1, \, v, \, v^2)$, we obtain the preservation of mass and momentum and the dissipation of kinetic energy:
			\begin{equation}
			  \label{eqMacroOp}
		    \int_\RR  \Q_\alpha(f,f)(v) \begin{pmatrix} 1 \\ v \\ v^2 \end{pmatrix} dv \,=\, \begin{pmatrix} 0 \\ 0 \\ - (1-\alpha^2) D(f,f) \end{pmatrix},
		  \end{equation}
		  where $D(f,f) \geq 0$ is the \emph{energy dissipation} functional, given by 
		  \begin{equation}
		  	\label{defDissipFunctional}
		    D(f,f) := \int_{\mathbb{R} \times \mathbb{R}}f \, f_* \, |v-v_*|^3 \, dv \, dv_* \geq 0.
		  \end{equation}
		  The conservation of mass implies an \emph{a priori} bound for $f$ in $L^\infty\left (0,T; \; L^1(\RR \times \RR)\right )$.
		  Moreover, these macroscopic properties of the collision operator, together with the conservation of positiveness, imply that the equilibrium profiles of $\Q_\alpha$ are trivial Dirac masses (see \eg the review paper \cite{Villani:2006} of Villani).
		  

	  
		  Finally, we can give a precise estimate of the energy dissipation functional. Indeed, applying Jensen's inequality to the convex function $v \mapsto |v|^3$ and to the measure $f(v_*)\, dv_*$, we get
			\begin{align*}
				\int_{\mathbb{R}} f(v_*) |v-v_*|^3 dv_* \geq & \left| v \int_\mathbb{R} f(v_*) \, dv_* - \int_\mathbb{R} v_* \, f(v_*) \, dv_* \right| = \left |\rho \, (v - u) \right |^3 .
			\end{align*}
			Using H\"older inequality, it comes that the energy dissipation is such that
			\begin{align} 
			  D(f,f) & \geq \rho^3  \int_\RR f(v) \, |v - u|^3 \, dv \notag \\
			   & \geq \rho^{5/2} \left( \int_\RR f(v) \, |v-u|^2 dv \right)^{\frac{3}{2}}.
			   \label{ineqBoundDissipEner}
			\end{align}	

     \begin{remark}
       Let us define the \emph{temperature} of a particle distribution function $f$ by.
       \[ \theta(t,x) := \int_\RR |v-u|^2 \, f(t,x,v) \, dv.\]
       Multiplying equation \eqref{eqBoltzEPS} by $|v-u|^2$ and integrating with respect to the velocity and space variables yields thanks to \eqref{ineqBoundDissipEner} the so-called \emph{Haff's} Law \cite{haff:1983}
       \begin{equation}
         \label{ineqHaffLaw}
         \int_\RR \theta^\ve(t,x) \, dx \, \lesssim \, \frac{1-\alpha}\ve \frac1{(1+t)^2}.
       \end{equation}
       This asymptotic behavior of the macroscopic temperature is characteristic of granular gases, and has been proved to be optimal in the space homogeneous case for constant restitution coefficient by Mischler and Mouhot in \cite{MischlerMouhot:20062}. 
       These results have then been extended to a more general class of collision kernel and restitution coefficients by Alonso and Lods in \cite{AlonsoLods:2010,AlonsoLods:2013} and by the second author in \cite{rey:2011}.
       Nevertheless, in all these works, additional constraints on the smoothness of the initial data  (a somehow nonphysical $L^p$ bound for $p>1$) are required for the results to hold.
     \end{remark}

The existence in the general $\RR_x^3 \times \RR_v^3$ setting, for a large class of velocity-dependent restitution coefficient but close to vacuum was obtained in \cite{Alonso:2009}. The stability in $L^1(\RR^3_x \times \RR^3_v)$ under the same assumptions was derived for instance in \cite{Wu:2009}. Finally the existence and convergence to equilibrium in $\TT_x^3 \times \RR_v^3$  for a diffusively heated, weakly inhomogeneous granular gas was proved in \cite{Tristani:2013}.

As one can imagine, the theory in the dimension $1$ case (as concerns us here) is much simpler. The existence of solutions for the granular gases equation \eqref{eqBoltzEPS} in one dimension of physical space and velocity, with a constant restitution coefficient was proved in \cite{BenedettoCagliotiPulvirenti:97} for compact initial data. The velocity-dependent restitution coefficient case, for small data, was then proven in \cite{BenedettoPulvirenti:2002}. More precisely, one has:

\begin{theorem}[From \cite{BenedettoPulvirenti:2002}]    
	Let us assume that there exists $\gamma \in (0,1)$ such that
	\[
		\alpha=\alpha(|v-v_*|) = \frac1{1+|v-v_*|^\gamma}.
	\] 
	Then, for $0\leq f^0 \in L^{\infty}(\RR_x \times \RR_v)$ with small total mass, there exists an unique mild, bounded solution in $L^\infty(\RR_x \times \RR_v)$ of \eqref{eqBoltzEPS}.
\end{theorem}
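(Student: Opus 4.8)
The plan is to realize the solution as the unique fixed point of the mild (Duhamel) formulation along free-transport characteristics, working in a Banach space whose norm is weighted in velocity so as to control the unbounded collision kernel, and to close both the self-mapping and the contraction estimates using the smallness of the total mass together with the dispersion of free streaming in one dimension.

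First I would pass to the characteristic variable $f^\#(t,x,v) := f(t,x+vt,v)$, which turns $\partial_t+v\,\partial_x$ into $\partial_t$. Splitting $\Q_\alpha(f,f)=\Q_\alpha^+(f,f)-f\,L(f)$ with $L(f)(v)=\int_\RR |v-v_*|\,f(v_*)\,dv_*$ and integrating the loss term with an exponential factor gives the fixed-point relation
\[
  f^\#(t,x,v)=f^0(x,v)\,e^{-\frac1\ve\int_0^t L(f)^\#(s,x,v)\,ds}
  +\frac1\ve\int_0^t e^{-\frac1\ve\int_s^t L(f)^\#(\tau,x,v)\,d\tau}\,[\Q_\alpha^+(f,f)]^\#(s,x,v)\,ds
  =:\mathcal T[f].
\]
Since the gain term, the loss frequency and the exponential factor are all non-negative, $\mathcal T$ manifestly preserves non-negativity, so positivity of the solution will be automatic once a fixed point is found.

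The analytic core is a dispersive estimate for the time-integrated collision frequency along a characteristic. For a freely transported density one has $f(t,y,v_*)\simeq f^0(y-v_*t,v_*)$, so
\[
  \int_0^{\infty}\!\!\int_\RR |v-v_*|\,f(t,x+vt,v_*)\,dv_*\,dt
  \simeq \int_\RR\!\!\int_0^{\infty}|v-v_*|\,f^0\big(x+(v-v_*)t,\,v_*\big)\,dt\,dv_* .
\]
For fixed $v_*$ the substitution $z=x+(v-v_*)t$ has Jacobian $|v-v_*|$, which cancels the kernel exactly and leaves $\int_\RR f^0(z,v_*)\,dz$; integrating in $v_*$ bounds the whole quantity by the total mass $\|f^0\|_{L^1}$, uniformly in $(x,v)$. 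This is the mechanism by which small mass turns the collision operator into a small perturbation. The only place where this clean cancellation fails is the gain term, where the Jacobian factor $1/\alpha^2=(1+|v-v_*|^\gamma)^2$ amplifies large relative velocities: after dispersion removes one power of $|v-v_*|$ there remains growth like $|v-v_*|^{2\gamma}$, and it is precisely here that the hypothesis $\gamma<1$ enters, since a subquadratic velocity moment can be absorbed by a weight $w(v)$ in the norm $\|f\|:=\sup_{t\ge0}\sup_{x,v} w(v)\,|f^\#(t,x,v)|$. I expect this weighted control of the gain term — reconciling the change of variables $(v,v_*)\mapsto(v',v_*')$ with the kernel growth — to be the main obstacle.

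With the dispersive bound in hand I would fix $M$ and verify that, for fixed $\ve$, $\mathcal T$ maps the ball $\{f\ge0:\|f\|\le M\}$ into itself: the first term is controlled by $\|f^0\|$, while the integral term, after bounding the exponential by $1$ and using the dispersive estimate, is controlled by $M$ times a constant proportional to the (weighted) mass of $f^0$, which is small by hypothesis. Applying the same estimates to the difference $\mathcal T[f]-\mathcal T[g]$, with the exponential factors handled by the elementary inequality $|e^{-a}-e^{-b}|\le|a-b|$ for $a,b\ge0$, yields a Lipschitz constant again proportional to the mass, hence strictly below $1$ for small enough data. Banach's fixed-point theorem then produces a unique non-negative $f^\#=\mathcal T[f^\#]$, that is a unique bounded mild solution of \eqref{eqBoltzEPS} in $L^\infty(\RR_x\times\RR_v)$.
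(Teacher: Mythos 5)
Your plan is the classical near-vacuum fixed-point scheme (Kaniel--Shinbrot/Illner--Shinbrot), but note first that the paper does not prove this statement at all: it is quoted from \cite{BenedettoPulvirenti:2002}, and the paper's only indication of the method is that the argument is ``reminiscent'' of Bony's interaction functional \cite{bony:1987}, i.e.\ an a priori space-time bound on $\int_0^T\!\iint\!\iint |v-v_*|\,f f_*\,$ obtained from the equation itself, combined with an iteration --- not a contraction in a dispersively weighted space. Measured against that, your proposal has a genuine gap at its analytic core. The dispersive cancellation
\begin{equation*}
\int_0^{\infty}\!\! |v-v_*|\, g\bigl(x+(v-v_*)t\bigr)\,dt \;=\; \int_x^{\pm\infty} g(z)\,dz
\end{equation*}
requires the integrand to be a \emph{time-independent} function of $z$ that is integrable in $x$. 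But an arbitrary element of your ball only satisfies $\sup_t\sup_{x,v} w(v)\,|f^\#(t,x,v)|\le M$ with a weight in $v$ alone; along the line $\{(s,\,x+(v-v_*)s)\}$ this gives no integrability whatsoever, and $\frac1\ve\int_0^t L(f)^\#\,ds$ can be infinite for $f$ in the ball. So the self-mapping and contraction estimates you describe (``controlled by $M$ times the mass of $f^0$'') do not follow from the chosen norm. To repair this within your framework you would need a majorant decaying in $x$, i.e.\ a norm $\sup_t\sup_{x,v} f^\#/g(x,v)$ with $g\in L^1_x$, and then smallness must be imposed on that weighted norm --- strictly stronger than the theorem's hypothesis of small total mass plus an $L^\infty$ bound. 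This mismatch between the available dispersive mechanism and the stated hypotheses is precisely why the actual proof runs through a Bony-type functional, which bounds the time-integrated interaction for genuine solutions (or iterates solving linear transport problems) by using the equation, not ball membership.

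A second, independent gap concerns the gain term. With a velocity-dependent restitution coefficient the factor $1/\alpha^2$ in the strong form \eqref{defBoltzOp} is not the full story: to define $\mathcal Q_\alpha^+$ one must invert the collision map, i.e.\ solve $r\,\alpha(r)=r/(1+r^\gamma)=r_{\mathrm{post}}$ for the pre-collisional relative velocity and control the resulting Jacobian, and it is exactly here --- monotonicity and surjectivity of $r\mapsto r/(1+r^\gamma)$, valid because $\gamma\in(0,1)$, together with the ensuing Jacobian bounds --- that the specific form of $\alpha$ is used. Your proposed role for $\gamma<1$ (absorbing the growth $|v-v_*|^{2\gamma}$ into a velocity weight) is not the binding constraint, and you concede that the weighted estimate for $\mathcal Q_\alpha^+$ is ``the main obstacle'' without closing it; as it stands, neither the invertibility issue nor the weighted bound is addressed, so the fixed-point argument is not complete even granting the dispersive step.
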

The main argument is reminiscent from a work due to Bony in \cite{bony:1987} concerning discrete velocity approximation of the Boltzmann equation in dimension $1$. 

Finally, the problem of the hydrodynamic limit was only tackled formally, and in the quasi-elastic setting $\alpha \to 1$. The first results for this case can be found in \cite{BenedettoCagliotiGolsePulvirenti:1999} for the one dimensional case. The review paper \cite{Toscani:2008} summarizes most of the known formal results for the general case.

   \subsection{Pressureless Euler: The Sticky Particles dynamics}
%
The pressureless system \eqref{sysSticky} is rather delicate. It can (and will in general) exhibits shocks as the velocity $u$ formally solves the Burgers equation where $\rho>0$. The implied lack of regularity on $u$ leads to concentrations on the density $\rho$ which is only a non-negative measure in general.

System \eqref{sysSticky} is hence in general ill-posed as classical solutions cannot exists for large times and weak solutions are not unique. It is however possible to recover a well posed theory by imposing a semi-Lipschitz condition on $u$. This theory was introduced in \cite{BouchutJames:1999}, and later extended in \cite{boudin:2000} and \cite{HuangWang:2001} (see also \cite{ERykovSinai:1996} and \cite{ChertockKurganovRykov:2007}). We cite below the main result of \cite{HuangWang:2001}, where $M^1(\RR)$ denotes the space of Radon measures on $\RR$ and $L^2(\rho)$ for $\rho\geq 0$ in $M^1(\RR)$ denotes the space of functions which are square integrable against $\rho$.
\begin{theorem} [From \cite{HuangWang:2001}]
For any $\rho^0\geq 0$ in $M^1(\RR)$ and any $u^0\in L^2(\rho^0)$, there exists $\rho\in L^\infty(\RR_+, M^1(\RR))$ and $u\in L^\infty(\RR_+, L^2(\rho))$ solution to System \eqref{sysSticky} in the sense of distribution and satisfying the semi-Lipschitz Oleinik-type bound
\begin{equation}
u(t,x)-u(t,y)\leq \frac{x-y}{t},\quad \mbox{for}\ a.e.\;x> y.\label{Oleinik0}
\end{equation}
Moreover the solution is unique if $u^0$ is semi-Lipschitz or if the kinetic energy is continuous at $t=0$
\[
\int_\RR \rho(t,dx)\,|u(t,x)|^2\longrightarrow \int_\RR \rho^0(dx)\,|u^0(x)|^2,\qquad \mbox{as}\ t\rightarrow 0.
\]\label{wellposedpressureless}
\end{theorem}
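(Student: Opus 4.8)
The plan is to construct solutions through the \emph{sticky particles} approximation and then to pass to the limit using the Oleinik bound \eqref{Oleinik0} as the key compactness mechanism. First I would discretize the initial data, approximating $\rho^0 \in M^1(\RR)$ by a finite sum of Dirac masses $\rho^0_n = \sum_{i} m_i \, \delta_{x_i}$, each carrying a velocity $u_i$ chosen so that $\rho^0_n \rightharpoonup \rho^0$ and $\rho^0_n u^0_n \rightharpoonup \rho^0 u^0$ weakly-$*$ in $M^1(\RR)$, with convergence of the initial energies. The associated dynamics is explicit: each atom travels ballistically, and whenever two atoms meet they merge into a single atom conserving mass and momentum, so the post-collision velocity is the mass-weighted average of the incoming ones. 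Since there are finitely many atoms, collisions are finite in number on any time interval, and the resulting pair $(\rho_n, u_n)$ is a piecewise-explicit measure-valued solution of \eqref{sysSticky}.

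Next I would establish the uniform \emph{a priori} bounds. Conservation of total mass and total momentum, together with the fact that each merger strictly dissipates kinetic energy, gives that $\rho_n$ is bounded in $L^\infty(\RR_+, M^1(\RR))$ and $u_n$ is bounded in $L^\infty(\RR_+, L^2(\rho_n))$, uniformly in $n$. The central structural estimate is the discrete Oleinik bound $u_n(t,x) - u_n(t,y) \le (x-y)/t$ for $x > y$: if two clusters located at $x > y$ at time $t$ had a velocity difference exceeding $(x-y)/t$, then tracing their ballistic trajectories back to time $0$ would force them to have crossed within $(0,t)$, hence to have already merged into a single cluster carrying one velocity, a contradiction. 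A careful bookkeeping of the merging times makes this rigorous.

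I would then pass to the limit. The uniform bounds yield, up to a subsequence, weak-$*$ limits $\rho_n \rightharpoonup \rho$ and $\rho_n u_n \rightharpoonup \rho u$. The delicate point is the nonlinear flux $\rho_n u_n^2$, which is the usual obstruction for pressureless systems, since concentrations of mass combined with oscillations of velocity can produce a defect in the limit. Here the Oleinik bound \eqref{Oleinik0}, being stable under weak limits, rules out velocity oscillations and forces $\rho_n u_n^2 \rightharpoonup \rho u^2$; the limit $(\rho,u)$ is then a distributional solution of \eqref{sysSticky} inheriting \eqref{Oleinik0}. I expect this identification of the quadratic flux, together with the rigorous derivation and limit-stability of the discrete Oleinik estimate, to be the main obstacle.

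Finally, for uniqueness I would exploit that the one-sided bound \eqref{Oleinik0} selects the entropy (sticky) solution. Two Oleinik solutions can be compared through their Lagrangian flow maps: the semi-Lipschitz bound produces a monotone, contractive forward flow, and a Lax--Oleinik type variational representation determines the cluster trajectories from the data. The additional hypothesis, namely either $u^0$ semi-Lipschitz or continuity of the kinetic energy at $t=0$, is exactly what removes the initial-layer ambiguity by pinning down the trace of $u$ against $\rho^0$, after which the two solutions must coincide.
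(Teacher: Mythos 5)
You should first note a mismatch of scope: the paper does not prove this theorem at all. It is imported verbatim from \cite{HuangWang:2001}, and the authors explicitly say its proof is ``quite delicate, relying on duality solutions''; all the paper itself supplies is the discrete rationale \eqref{eqMicroBound} behind the Oleinik bound \eqref{Oleinik0}, which your second paragraph reproduces correctly. Your overall architecture --- discretize into sticky particles, derive the discrete one-sided Lipschitz bound by back-tracing ballistic trajectories, pass to the limit --- is indeed the strategy of \cite{BrenierGrenier:1998} and of the existence half of \cite{HuangWang:2001}, so the skeleton is sound. But two of your steps are precisely the places where the real proofs deploy heavy machinery, and in your sketch they are asserted rather than argued.

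First, the flux identification. Writing that \eqref{Oleinik0} is ``stable under weak limits'' and therefore ``forces $\rho_n u_n^2 \rightharpoonup \rho u^2$'' is not an argument: $u_n$ is defined only $\rho_n$-almost everywhere on a \emph{varying} sequence of measures, so a one-sided Lipschitz bound does not by itself yield the strong compactness of $u_n$ needed to multiply weak limits; atoms of $\rho$ are exactly where $u_n$ may oscillate and where a defect in the quadratic flux could form. The published proofs circumvent this by changing variables: Brenier--Grenier pass to the cumulative distribution functions $M_n(t,x)=\rho_n(t,(-\infty,x])$, identify the limit $M$ as the entropy solution of a scalar conservation law with flux built from the initial data, and read the pressureless system off that; Huang--Wang instead use the Bouchut--James theory of duality solutions. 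Some such device is unavoidable, and your sketch contains none. Second, the uniqueness paragraph is a heuristic, not a proof: the claim that the semi-Lipschitz bound ``produces a monotone, contractive forward flow'' with a Lax--Oleinik representation \emph{for an arbitrary distributional solution satisfying \eqref{Oleinik0}} is essentially equivalent to the uniqueness theorem itself, and establishing it is the delicate duality argument of \cite{BouchutJames:1999,HuangWang:2001}. Note also that the extra hypothesis (semi-Lipschitz $u^0$, or continuity of the kinetic energy at $t=0$) is not cosmetic bookkeeping that ``pins down the trace'': without it uniqueness genuinely fails, because an initial atom of $\rho^0$ can be split in multiple ways compatible with \eqref{Oleinik0} for $t>0$, each dissipating a different amount of energy in the initial layer. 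Any complete proof must make the energy-continuity hypothesis enter quantitatively at $t=0$, which your outline does not do.
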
  
The proof of Th. \ref{wellposedpressureless} is quite delicate, relying on duality solutions. For this reason, we only explain the rational behind the bound \eqref{Oleinik0}, which can be seen very simply from the discrete sticky particles dynamics. We refer in particular to \cite{BrenierGrenier:1998} for the limit of this sticky particles dynamics as $N\rightarrow \infty$.
  
Consider $N$ particles on the real line. We describe the $i^{\text{th}}$ particle at time $t> 0$ by its position $x_i(t)$ and its velocity $v_i(t)$. Since we are dealing with a one dimensional dynamics, we can always assume the particles to be initially ordered  
     \[
x_1^{in} < x_2^{in} < \ldots < x_N^{in}. 
\]
The dynamics is characterized by the following properties     
     \begin{enumerate}[label=(\roman{*})]
 \item The particle $i$ moves with velocity $v_i(t)$: $\frac{d}{dt} x_i(t)=v_i(t)$.     
 \item The velocity of the $i^{\text{th}}$ particle is constant, as long as it does not collide with another particle: $v_i(t)$ is constant as long as $x_i(t)\neq x_j(t)$ for all $j\neq j$.
       \item The velocity jumps when a collision occurs: if at time $t_0$ there exists $j \in \{1, \ldots, N \}$ such that $x_j(t_0 ) = x_i(t_0)$ and $x_j(t) \neq x_i(t)$ for any $t < t_0$, then all the particles with the same position take as new velocity the average of all the velocities
         \begin{equation*}
           \label{defStickyDynamics}
           v_i(t_0+) = \frac{1}{|{j | x_j(t_0 ) = x_i(t_0)}|}\sum_{j | x_j(t_0 ) = x_i(t_0)} v_j(t_0-).
         \end{equation*}
     \end{enumerate}
    Note in particular that particles having the same position at a given time will then move together at the same velocity. Hence, only a finite number of collisions can occur, as the particles aggregates.

This property also leads to the Oleinik regularity. Consider any two particles $i$ and $j$ with $x_i(t)>x_j(t)$. Because they occupy different positions, they have never collided and hence $x_i(s)>x_j(s)$ for any $s\leq t$. If neither had undergone any collision then $x_i(0)=x_i(t)-v_i(t)\,t>x_j(0)=x_j(t)-v_j(t)\,t$ or
\begin{equation}
	     \label{eqMicroBound}
	     \frac{\left (v_{i} - v_j\right )_+}{\left (x_{i}-x_j\right )_+} < \frac{1}{t}, 
	   \end{equation}
     where $x_+ := \max (x,0)$. It is straightforward to check that \eqref{eqMicroBound} still holds if particles $i$ and $j$ had some collisions
between time $0$ and $t$.

As one can see this bound is a purely dispersive estimate based on free transport and the exact equivalent of the traditional Oleinik regularization for Scalar Conservation Laws, see \cite{Oleinik}. It obviously leads to the semi-Lipschitz bound \eqref{Oleinik0} as $N\rightarrow \infty$.
  
  We conclude this subsection with the following remark which foresees our main method. 
  
\begin{remark}
\label{remEqMeasure}
Define the empirical measure of the distribution of particles 
     \begin{equation}
       \label{defEmpiricalMeasure}
       f_N(t,x,v) := \sum_{i=1}^N \delta_0\left (x - x_i(t)\right ) \delta_0\left (v - v_i(t)\right ).
     \end{equation}
The empirical measure is solution to the following kinetic equation
       \begin{equation}
         \partial_t f_N + v \, \partial_x f_N \, = \, - \partial_{vv}m_N,
\label{kineticform}
\end{equation}
       for some non-negative measure $m_N$. This equation embeds the fundamental properties of the dynamics: conservation of mass and momentum, and dissipation of kinetic energy. It is in several respect a sort of kinetic formulation, rather similar to the ones introduced for some conservation laws \cite{LionsPerTad, LionsPerTad2}, see also \cite{Perthame}.

The kinetic formulation \eqref{kineticform} has to be coupled with a constraint on $f_n$ (just like for Scalar Conservation Laws). Unsurprisingly this constraint is that $f_N$ has to  be monokinetic
\[
f_N=\rho_N(t,x)\,\delta(v-u_N(t,x)).
\]
\end{remark}

		\subsection{Main Result}
We are now ready to state the main result of this article.		  
\begin{theorem}\label{mainresult}
Consider a sequence of weak solutions $f_\eps(t,x,v)\in L^\infty([0,\ T],\ L^p(\R^2))$ for some $p>2$ and with total mass $1$ to the granular gases Eq. \eqref{eqBoltzEPS} such that all initial $v$-moments are uniformly bounded in $\eps$
\begin{equation}
\sup_\eps\int_{\R^2} |v|^k\,f^0_\eps(x,v)\,dx\,dv<\infty,\label{initialmoments}
\end{equation}
some moment in $x$ is uniformly bounded, for instance
\begin{equation}
\sup_\eps \int_{\R^2} |x|^2\,f_\eps^0(x,v)\,dv<\infty,
\end{equation}
and $f^0_\eps$ is, uniformly in $\eps$, in some $L^p$ for $p>1$
\begin{equation}
\sup_\eps\int_{\R^2} (f^0_\eps(x,v))^p\,dx\,dv<\infty.\label{initiallp}
\end{equation}
Then any weak-* limit $f$ of $f_\eps$ is monokinetic, $f=\rho(t,x)\,\delta(v-u(t,x))$ for $a.e.\ t$, where $\rho,\;u$ are a solution in the sense of distributions to the pressureless system \eqref{sysSticky} while $u$ has the Oleinik property for any $t>0$
\[
u(t,x)-u(t,y)\leq \frac{x-y}{t},\quad \mbox{for}\ \rho\ a.e.\ x\geq y.
\]
\end{theorem}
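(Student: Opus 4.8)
The plan is to combine three ingredients: weak-$*$ compactness from the uniform mass, moment and $L^p$ bounds; the energy dissipation, which is the kinetic signature of inelasticity and forces the velocity spread to vanish; and a dispersive, Oleinik-type estimate at the kinetic level which controls the spatial oscillations of the bulk velocity and is inherited by the limit. First I would set up compactness: conservation of mass gives a uniform $L^\infty(0,T;L^1)$ bound and, with \eqref{initiallp}, a uniform $L^\infty(0,T;L^p)$ bound, $p>2$; the moment bounds \eqref{initialmoments} (propagated in time through the dissipative structure and Haff's law \eqref{ineqHaffLaw}) give tightness in $v$, while the space moment bound gives tightness in $x$. Hence, up to extraction, $f_\eps\rightharpoonup f$ weak-$*$ with unit mass, and equi-integrability in $v$ lets the low-order velocity moments pass to the limit, so that $\rho^\eps\rightharpoonup\rho=\int_\R f\,dv$, $\rho^\eps u^\eps\rightharpoonup q=\int_\R v\,f\,dv$ and $E^\eps\rightharpoonup E=\int_\R v^2\,f\,dv$.

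Second, I would exploit the energy dissipation. Taking the $v^2$-moment of \eqref{eqBoltzEPS}, integrating in $x$ and using \eqref{eqMacroOp}, the flux term drops and
\[
\frac{d}{dt}\int_\R E^\eps\,dx=-\frac{1-\alpha^2}{\eps}\int_\R D(f_\eps,f_\eps)\,dx,
\]
so that, after integrating in time and discarding the nonnegative final energy,
\[
\int_0^T\!\!\int_\R D(f_\eps,f_\eps)\,dx\,dt\le\frac{\eps}{1-\alpha^2}\int_\R E^\eps(0,x)\,dx\underset{\eps\to0}{\longrightarrow}0.
\]
By the lower bound \eqref{ineqBoundDissipEner} this controls $\int_0^T\!\int_\R(\rho^\eps)^{5/2}(\theta^\eps)^{3/2}\,dx\,dt\to0$, i.e. the kinetic temperature $\theta^\eps$ dissipates.

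Third --- and here I expect the real difficulty --- I would establish the kinetic dispersive estimate. Energy dissipation alone does \emph{not} suffice: fiber-wise monokinetic (zero-temperature) $f_\eps$ with velocity oscillating in $x$ can produce a limit of positive temperature, and indeed vanishing of $\theta^\eps$ combined with lower semicontinuity of the perspective map $(\rho,q)\mapsto q^2/\rho$ yields only $E\ge\rho\,u^2$. What rules out the oscillations is the one-sided bound \eqref{eqMicroBound}, which I would reproduce at the kinetic level. Following the free-transport heuristic that faster particles end up ahead, I would differentiate in time a functional measuring the mass of wrongly ordered pairs, of the form
\[
\iint_{x>y}\Bigl(v-w-\tfrac{x-y}{t}\Bigr)_+\,f_\eps(t,x,v)\,f_\eps(t,y,w)\,dx\,dv\,dy\,dw,
\]
and show that transport makes it nonincreasing while the collision operator, being momentum-preserving and energy-dissipating (hence compatible with the sticky ordering), contributes an error controlled by the dissipated energy. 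This should yield an approximate one-sided Lipschitz bound $u^\eps(t,x)-u^\eps(t,y)\le\frac{x-y}{t}+o(1)$ for $x\ge y$.

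Finally, that approximate one-sided Lipschitz bound makes $x\mapsto u^\eps(t,x)-x/t$ essentially nonincreasing, giving uniform local $BV$ control in $x$ and, together with the time regularity read off from \eqref{eqBoltzEPS}, strong $L^1_{\mathrm{loc}}$ compactness, so $u^\eps\to u$ strongly. Then $\rho^\eps(u^\eps)^2\to\rho\,u^2$, and since $\theta^\eps\to0$ we obtain $E=\rho\,u^2$, the equality case of Cauchy--Schwarz, which forces $f=\rho(t,x)\,\delta(v-u(t,x))$. Passing to the limit in the mass and momentum balances is then routine --- the mass equation is linear and the momentum flux $E^\eps\to\rho\,u^2$ by the above --- yielding \eqref{sysSticky} distributionally, while the Oleinik property of $u$ follows by passing to the limit in the kinetic dispersive estimate. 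The crux on which everything hinges --- strong compactness of $u^\eps$, the monokinetic structure, and the final Oleinik bound --- is the rigorous derivation of this kinetic dispersive estimate in the presence of collisions and its quantitative coupling to the dissipated energy.
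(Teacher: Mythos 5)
Your overall architecture is in the right spirit --- compactness, collisional dissipation, and a dispersive pair functional comparing $(v-w)_+$ with $(x-y)/t$ --- but the pivotal step 3 contains a genuine gap, and it is precisely the one the paper warns about: an approximate pointwise Oleinik bound $u^\eps(t,x)-u^\eps(t,y)\leq (x-y)/t+o(1)$ \emph{cannot} hold at finite $\eps$, since it fails for any distribution that is not monokinetic (fast particles overtake slow ones at finite collision rate, and near vacuum or on small scales $u^\eps$ can violate the one-sided bound by $O(1)$). Your functional $\iint_{x>y}(v-w-\tfrac{x-y}{t})_+ f_\eps f_\eps'$ only yields an \emph{averaged} statement weighted by $f_\eps\otimes f_\eps$; with no lower bound on $\rho^\eps$, such a bound costs nothing where mass is thin and cannot be upgraded to a pointwise one-sided Lipschitz estimate on the bulk velocity. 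Consequently the $BV$-in-$x$ control and the strong $L^1_{\mathrm{loc}}$ compactness of $u^\eps$ --- the linchpin of your steps 3--4, needed to pass from $\theta^\eps\to0$ and $E\geq\rho u^2$ to $E=\rho u^2$ --- are unsupported. Note also that the weighted dissipation bound you derive, on $(\rho^\eps)^{5/2}(\theta^\eps)^{3/2}$, degenerates exactly where $\rho^\eps$ is small, so even ``$\theta^\eps\to 0$'' is only available in a weighted integrated sense.

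A second, more technical error: your claim that ``transport makes it nonincreasing'' ignores the flux of crossing pairs through $\{x=y\}$. Pairs with $v>w$ enter the region $x>y$ and contribute a \emph{positive} boundary term of the form of the trace functional $\Lambda_{f,k}$ in \eqref{traceonx}; this is why the paper's dissipation inequalities \eqref{generalk} carry the $\Lambda_{f_\eps,k+2}$ remainder, why the hypothesis $f_\eps\in L^p$ with $p>2$ is needed to identify this trace with a same-point pair integral, and why one must then show it is $O(\eps)$ via the collisional dissipation of $|v|^k$-moments (your estimate $\int_0^T\!\int_\R D(f_\eps,f_\eps)\,dx\,dt\lesssim\eps$ is the $k=3$ instance of this, so your instinct here is sound but unexploited). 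Finally, even granting the averaged bound, a single first-power functional does not suffice: the paper needs the whole hierarchy $\mathcal L_{\eta,\mu,k}$, whose dissipation at level $k-1$ controls level $k$ (Theorem \ref{thmBoundL}), a Gronwall argument producing $t^{k-1}\mathcal L_{\eta,0+,k-1}\leq C$, and the limit $k\to\infty$ to extract the pointwise ($\rho\otimes\rho$-a.e.) Oleinik bound on the limit $u$; and the monokinetic structure itself is obtained not through strong compactness of $u^\eps$ but through Proposition \ref{functionalimpliesmonokinetic}, whose proof requires a delicate splitting of $\rho$ into atomic and non-atomic parts (Besicovitch differentiation for the latter, a splitting-of-atoms contradiction argument using the equation for the former). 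These are the missing ideas between your (correct) heuristic and a proof.
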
		
\begin{remark}
It is possible to replace the $L^p$ condition on $f^0$ by assuming that $f^0_\eps$ is well prepared in the sense that $f_\eps^0\rightarrow \rho^0\,\delta(v-u^0(x))$ for some $u^0$ Lipschitz with the convergence in an appropriate sense (made precise in Remark \ref{rkuniqueness} after Theorem \ref{generalhydro}). In that case one knows in addition that the limit is the unique ``sticky particles'' solution to the pressureless system \eqref{sysSticky} as obtained in \cite{BouchutJames:1999,HuangWang:2001}  
\end{remark}
The basic idea of the proof of Th. \ref{mainresult} is to use the kinetic description \eqref{kineticform} to compare the granular gases dynamics to pressureless gas system. The main difficulty is to show that $f_\ve$ becomes monokinetic at the limit. This is intimately connected to the Oleinik property \eqref{Oleinik0}, just as this property is critical to pass to the limit from the discrete sticky particles dynamics.

Unfortunately it is not possible to obtain \eqref{Oleinik0} directly. Contrary to the sticky particles dynamics, this bound cannot hold for any finite $\ve$ (or for any distribution that is not monokinetic). This is the reason why it is very delicate to obtain the pressureless gas system from kinetic equations (no matter how natural it may seem). Indeed we are only aware of one other such example in \cite{KangVasseur:2014}.

One of the main contributions of this article is a complete reworking of the Oleinik estimate, still based on dispersive properties of the free transport operator $v\,\partial_x $ but compatible with kinetic distributions that are not monokinetic.

The next section is devoted to the introduction and properties of the corresponding new functionals. This will allow us to prove a more general version of Th. \ref{mainresult} in the last section.
  \section{A New Dissipative Functional for kinetic equations}
    \label{secFunc}
\subsection{Basic Definitions}
    The heart of our proof relies on new dissipative properties of kinetic equations which are
\begin{itemize}
\item Contracting in velocity;
\item Close to monokinetic.
\end{itemize}
Mathematically speaking, consider $f\in L^\infty([0,\ T], M^1(\R^2)$ solution to
  \begin{equation}
         \label{basickinetic}
         \partial_t f+ v \, \partial_x f \, = \, - \partial_{vv} m,\qquad m\in M^1([0,\ T]\times\R^2),\quad m\geq0. 
       \end{equation}
We also need a notion of trace for $f$ and more precisely that
\begin{equation}
\label{traceonx}
\Lambda_{f,k}(t)=\limsup_{\delta\rightarrow 0}\frac{1}{\delta}\int_{x\in \R}\int_{x<y<x+\delta}\int_{v,w\in \R^2} (v-w)_+^k f(t,x,v)\,f(t,y,w)\,dv \,dw\,dy\,dx\in L^1([0,\ T]).
\end{equation} 
This system is now dissipative and will yield as a dissipation rate a control   on the following nonlinear functional for any $\eta$, $\mu>0$, $k\geq 1$     
    \begin{equation}
      \label{defFunctionalL}
      \mathcal L_{\eta,\mu,k}(f)(t) := \int  \frac{(v-w)_+^{k+2}}{( x-y + \eta)^k}\, \chi_\mu (x-y) \, f(t,x,v) f(t,y,w) \,dv\,dw\,dx\,dy,
    \end{equation}	
    where the function $\chi_\mu$ is a smooth, non-centered approximation of the Heaviside function, as in Figure \ref{figHeavisideSmooth}. In particular $\chi_\mu$ is non-increasing in $\mu$ and
\begin{equation}
0\leq \chi_\mu(x) \leq \ind_{x>0},\quad 0\leq \chi_\mu'(x)\leq \frac{2}{\mu}\,\ind_{0<x<mu}.\label{hypchi}
\end{equation}

    This functional is somehow similar to the one described by Bony in \cite{bony:1987}, and used by Cercignani in \cite{Cercignani:1992} and by Biryuk, Craig and Panferov in \cite{BiCrPa:2006}.

To make notations consistent, we define when $k=0$
\begin{equation}
      \label{defFunctionalLk0}
      \mathcal L_{\eta,\mu,0}(f)(t) := -\int  \left (v-w\right )_+^{2}\, \log( x-y + \eta)_-\, \chi_\mu (x-y) \, f(t,x,v) f(t,y,w) \,dv\,dw\,dx\,dy.
    \end{equation}       
We also define, from the monotonicity of $\chi_\mu$
\begin{equation}
      \label{defFunctionalLmu0}
      \mathcal L_{\eta,0+,k}(f)(t) := \sup_{\mu\rightarrow 0} \mathcal L_{\eta,\mu,k}(f)(t)=\limsup_{\mu\rightarrow 0} \mathcal L_{\eta,\mu,k}(f)(t).
    \end{equation}
Observe that for $\mu=0$, $\mathcal L_{\eta,0,k}(f)(t)$  may not be well defined and may in fact depend on the way the Heavyside function $\mathcal I(x-y)$ is approximated. This is the reason for the precise definition above of $\mathcal L_{\eta,0+,k}(f)(t)$. Furthermore from the trace property \eqref{traceonx}, whatever the definition of $\mathcal L_{\eta,0,k}(f)(t)$, one would have that $\int_0^T \mathcal L_{\eta,0,k}(f)(t)\,dt\leq \int_0^T \mathcal L_{\eta,0+,k}(f)(t)\,dt+2\int_0^T\Lambda_{f,k}(t)\,dt$ as explained below.    
    \begin{figure} 
      \includegraphics[trim = 80 225 80 45, clip,scale=1]{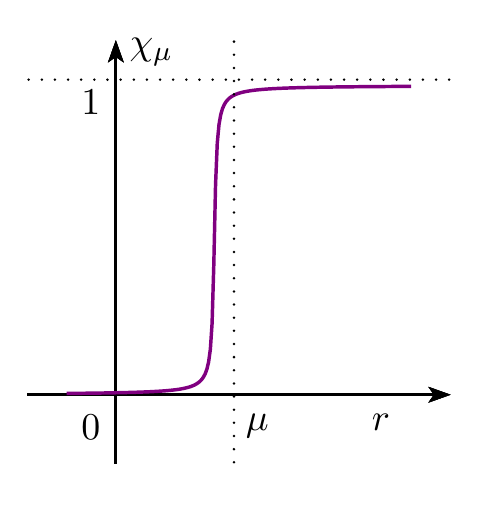}
			\caption{Smoothed, non-centered approximation of the Heaviside function $\chi_\mu$.}
			\label{figHeavisideSmooth}
		\end{figure}

    \begin{example}
It is possible to prove that $\mathcal L_{\eta,\mu,k}$ is bounded  for the sticky particle dynamics. Indeed, let $\left (x_i(t), v_i(t)\right )_{1 \leq i \leq N}$ for $N \in \NN$ be solution to the sticky particles system \eqref{defStickyDynamics} and $f_N$ be the associated empirical measure given by \eqref{defEmpiricalMeasure}. We already observed in Remark \ref{remEqMeasure} that $f_N$ solves \eqref{basickinetic}; moreover it has the trace property \eqref{traceonx} with $\Lambda_{f,k}=0$. 

In that simple example, it is possible to bound $\mathcal L_{\eta,\mu,k}$ directly by using \eqref{eqMicroBound}, so that
      \[ 0 \leq \mathcal L_{\eta,\mu,k}(f_N)(t) \leq |\max v_i|^2\,\left(\sup_t\sup_{i\neq j} \frac{(v_{i}-v_j)_+}{(x_i-x_j)_+}\right)^k\leq C^k, 
\]
independently of $\eta$ and $\mu$.
    \end{example}
Let us start with some basic properties of  $\mathcal L_{\eta,\mu,k}(f_N)(t)$. 
\begin{lemma}
\label{basicproperty}
Assume that $f\in L^\infty([0,\ T],\;M^1(\R^2))$ solves \eqref{basickinetic}, and has bounded moments in $v$ for some $k\geq 0$
\begin{equation}
\sup_{t\in[0,\ T]}\int_{\R^2} |v|^{k+3}\,f(t,x,v)\,dx\,dv<\infty.\label{momentk+3}
\end{equation}
Then for any $\eta,\;\mu>0$, $\mathcal L_{\eta,\mu,k}(f)(t)$ is $BV$ in $t$; in particular $\mathcal L_{\eta,\mu,k}(f)(t)$ is continuous at $a.e.\; t$ and has a left and right trace at every $t$. Furthermore for any $s,\;t$
\begin{equation}
\int_s^t \mathcal L_{\eta,\mu,k}(f)(r)\,dr\longrightarrow \int_s^t \mathcal L_{\eta,0+,k}(f)(r)\,dr, \qquad \mbox{as}\ \mu\rightarrow 0. \label{limitmu0}
\end{equation}
The functional $\int_s^t \mathcal L_{\eta,\mu,k}(f)(r)\,dr$ is also continuous in $f$ and $\int_s^t\mathcal L_{\eta,0+,k}(f)(t)$ is lower semi-continuous in the following sense: If $f_n$ is a sequence of solutions to \eqref{basickinetic} with right-hand sides $m_n\geq 0$ s.t.  	    
\[
\sup_n\,\sup_{t\in[0,\ T]}\int_{\R^2} (|x|^2+|v|^{k+3})\,f_n(t,x,v)\,dx\,dv<\infty,
\]
and $f_n\rightarrow f$ in $w-*\,L^\infty([0,\ T],\;M^1(\R^2))$ then 
\[
\int_s^t \mathcal L_{\eta,\mu,k}(f)(r)\,dr=\lim \int_s^t \mathcal L_{\eta,\mu,k}(f_n)(r)\,dr,\quad \int_s^t \mathcal L_{\eta,0+,k}(f)(r)\,dr=\liminf \int_s^t \mathcal L_{\eta,0+,k}(f_n)(r)\,dr.
\]
\end{lemma}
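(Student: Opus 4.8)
The plan is to establish the three assertions (BV in $t$, the $\mu\to0$ limit, and continuity/lower semicontinuity in $f$) separately, the last being by far the most delicate.

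\textbf{BV in time.} Write the kernel as $\Phi_{\mu,k}(x,y,v,w)=(v-w)_+^{k+2}\,\omega_k(x-y)\,\chi_\mu(x-y)$ with $\omega_k(z)=(z+\eta)^{-k}$ for $k\geq1$ (and $\omega_0$ the logarithmic weight of \eqref{defFunctionalLk0} for $k=0$), so that $\mathcal L_{\eta,\mu,k}(f)(t)=\iint\Phi_{\mu,k}\,f(t,x,v)f(t,y,w)$. I would differentiate this quadratic expression in $t$, insert $\partial_t f=-v\,\partial_x f-\partial_{vv}m$ into each factor, and integrate by parts. Since $\omega_k\chi_\mu$ depends on $x-y$ only, one has $\partial_y\Phi_{\mu,k}=-\partial_x\Phi_{\mu,k}$ on the spatial weight, so the two transport contributions combine into $\iint(v-w)\,\partial_x\Phi_{\mu,k}\,ff=\iint(v-w)_+^{k+3}\big[\omega_k'\chi_\mu+\omega_k\chi_\mu'\big]ff$; the $\omega_k'\chi_\mu$ piece equals $-k\,\mathcal L_{\eta,\mu,k+1}(f)\leq0$ and is thus dissipative. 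Integrating the diffusion terms by parts twice in $v$ (resp. $w$) produces $\partial_{vv}\Phi_{\mu,k}=(k+2)(k+1)(v-w)_+^{k}\,\omega_k\chi_\mu\geq0$ tested against $m\geq0$ and $f\geq0$, hence nonpositive contributions. Altogether
\[
\frac{d}{dt}\mathcal L_{\eta,\mu,k}(f)=-k\,\mathcal L_{\eta,\mu,k+1}(f)-\mathcal D_v-\mathcal D_w+\mathcal T_\mu,
\]
where $\mathcal D_v,\mathcal D_w\geq0$ come from the diffusion and $\mathcal T_\mu$ carries $\chi_\mu'$. The only sign-indefinite term is $\mathcal T_\mu$, which by \eqref{hypchi} is supported in $\{0<x-y<\mu\}$ with $\chi_\mu'\leq\frac2\mu$ and $\omega_k\leq\eta^{-k}$ there; estimating $(v-w)_+^{k+3}\lesssim|v|^{k+3}+|w|^{k+3}$ and using the unit mass together with \eqref{momentk+3} bounds $\mathcal T_\mu$ by $C_{\mu,\eta}\,M_{k+3}$, with $M_{k+3}:=\sup_t\int_{\R^2}|v|^{k+3}f<\infty$, finite for each fixed $\mu$. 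Since moreover $0\leq\mathcal L_{\eta,\mu,k}(f)\leq\eta^{-k}\iint(v-w)_+^{k+2}ff<\infty$ is bounded, $\mathcal L_{\eta,\mu,k}(f)$ is $BV$ in $t$, hence $a.e.$ continuous with one-sided traces everywhere.

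\textbf{Limit $\mu\to0$.} Here I would use only that $\chi_\mu$ is non-increasing in $\mu$ by \eqref{hypchi}, so that $\mu\mapsto\mathcal L_{\eta,\mu,k}(f)(t)$ is non-decreasing as $\mu\downarrow0$ and converges pointwise to $\mathcal L_{\eta,0+,k}(f)(t)$ by \eqref{defFunctionalLmu0}. The uniform bound $\mathcal L_{\eta,\mu,k}(f)\leq\eta^{-k}\iint(v-w)_+^{k+2}ff$ is independent of $\mu$ and integrable on $[s,t]$, so the monotone convergence theorem gives \eqref{limitmu0}.

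\textbf{Continuity in $f$ and lower semicontinuity.} This is the heart of the matter and the step I expect to be the main obstacle, because the functional is quadratic and \emph{diagonal in time}: a mere $w\text{-}*$ limit of $f_n$ does not control the products $f_n(r)\otimes f_n(r)$ integrated in $r$. The remedy is to exploit the equation \eqref{basickinetic}. Testing it against $v^2/2$ shows $\iint_{[0,T]\times\R^2}dm_n$ is uniformly bounded by the $v$-moments; then for fixed $\phi$ the map $r\mapsto\int\phi f_n(r)$ has distributional derivative $\int v\,\partial_x\phi\,f_n-\int\partial_{vv}\phi\,dm_n$, whence these maps are uniformly $BV$ in $t$. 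Combined with the tightness in $(x,v)$ furnished by the uniform $|x|^2$ and $|v|^{k+3}$ bounds, a Helly-type selection argument (over a countable dense set of test functions) upgrades the convergence to $f_n(r)\to f(r)$ in $w\text{-}M^1(\R^2)$ for $a.e.\ r$, the limit being identified with $f$. At a fixed such $r$ the tensor product $f_n(r)\otimes f_n(r)\to f(r)\otimes f(r)$ weak-$*$ on $\R^4$; splitting $\Phi_{\mu,k}$ into a compactly supported continuous part (bounded, since for fixed $\mu,\eta>0$ it lives in $\{x>y\}$ where $\omega_k\leq\eta^{-k}$) and a tail controlled uniformly in $n$ by the moment bounds via $(v-w)_+^{k+2}\lesssim|v|^{k+2}+|w|^{k+2}$, and applying dominated convergence in $r$, I obtain $\int_s^t\mathcal L_{\eta,\mu,k}(f_n)\to\int_s^t\mathcal L_{\eta,\mu,k}(f)$. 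Finally, writing $\mathcal L_{\eta,0+,k}=\sup_{\mu>0}\mathcal L_{\eta,\mu,k}$ as a supremum of these continuous functionals, for each $\mu$ one has $\int_s^t\mathcal L_{\eta,\mu,k}(f)=\lim_n\int_s^t\mathcal L_{\eta,\mu,k}(f_n)\leq\liminf_n\int_s^t\mathcal L_{\eta,0+,k}(f_n)$; letting $\mu\downarrow0$ and invoking the monotone convergence of the previous step yields the asserted lower semicontinuity $\int_s^t\mathcal L_{\eta,0+,k}(f)\leq\liminf_n\int_s^t\mathcal L_{\eta,0+,k}(f_n)$.
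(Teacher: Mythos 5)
Your proposal is correct, and for the first two assertions it matches the paper's proof almost step for step: the paper also differentiates $\mathcal L_{\eta,\mu,k}(f)(t)$ along \eqref{basickinetic}, integrates by parts to identify the same dissipative transport term (your $-k\,\mathcal L_{\eta,\mu,k+1}(f)$), discards the nonpositive diffusion contributions using $m\geq 0$ and $\partial_{vv}(v-w)_+^{k+2}\geq 0$, and bounds the $\chi_\mu'$ remainder by $\frac{4}{\mu\,\eta^{k}}\int |v|^{k+3}f$ via \eqref{hypchi} and \eqref{momentk+3} (with the separate logarithmic computation for $k=0$, which you flag); for \eqref{limitmu0} the paper uses dominated rather than monotone convergence, an immaterial difference. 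Where you genuinely diverge is the continuity-in-$f$ step. The paper extracts the same uniform bound $\sup_n\int m_n<\infty$ by testing with $|v|^2$, but then argues that $\partial_t f_n$ is bounded in $M^1_{loc}+L^\infty(W^{-1,1})$, so $f_n$ is compact in $L^2([0,\ T])$ with values in a weak space, and its key device is that the averaged quantity $I_n(t,x,v)=\int_{\R^2}(v-w)_+^{k+2}(x-y+\eta)^{-k}\chi_\mu(x-y)\,f_n(t,dy,dw)$ is compact in $L^2([0,\ T],C^1_{x,v})$, so that $\int_s^t\mathcal L_{\eta,\mu,k}(f_n)\,dr=\int_s^t\int f_n I_n\,dr$ passes to the limit as a weak--strong pairing. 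You instead run Helly's selection on the uniformly $BV$ scalar maps $t\mapsto\int\phi\,f_n(t)$ over a countable test family, obtain $f_n(r)\to f(r)$ weakly for a.e. $r$, tensorize at fixed $r$, and conclude by dominated convergence; this is a more elementary implementation of the same time-compactness, at the price of pointwise-in-time statements, whereas the paper's route keeps everything at the level of $L^2_t$ pairings. Two points you should make explicit: Helly only yields a subsequence, so add the standard remark that since the a.e. limit is uniquely identified with $f$, every subsequence has a further subsequence with the same limit of $\int_s^t\mathcal L_{\eta,\mu,k}(f_n)\,dr$, hence the full sequence converges; and your tensorized passage to the limit against the unbounded kernel needs the uniform $(k+3)$-moments in $v$ and the $|x|^2$-moments for uniform integrability and tightness, which is precisely why these appear in the hypotheses (you do invoke them). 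Finally, note that your argument, like the paper's (supremum over $\mu$ of continuous functionals), establishes the lower-semicontinuity inequality $\int_s^t\mathcal L_{\eta,0+,k}(f)\,dr\leq\liminf_n\int_s^t\mathcal L_{\eta,0+,k}(f_n)\,dr$; the equality sign in the last display of the lemma should be read in that sense.
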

\begin{proof}
First of all the $\mathcal L_{\eta,\mu,k}(f)$ are bounded by moments of $f$
\[\begin{split}
\mathcal L_{\eta,\mu,k}(f)(t)&\leq \eta^{-k}\,\int_{\R^4} (|v|^{k+2}+|w|^{k+2})\,f(t,x,v)\,f(t,y,w)\,dx\,dy\,dv\,dw\\
&= \eta^{-k}\,\left(\int_{\R^2} |v|^{k+2}\,f(t,x,v)\,dx\,dv\right)^2.
\end{split}
\] 	    
By its definition $\mathcal L_{\eta,\mu,k}(f)(t)$ converges pointwise in $t$ to $\mathcal L_{\eta,0+,k}(f)(t)$. Thus the previous bound implies by dominated convergence that for any $s,\;t$
\[
\int_s^t \mathcal L_{\eta,\mu,k}(f)(r)\,dr\longrightarrow \int_s^t \mathcal L_{\eta,0+,k}(f)(r)\,dr,
\]  	   
as $\mu\rightarrow 0$.	    

Next denoting $f'=f(t,y,w)$, from the equation \eqref{basickinetic} on $f$, since every term in $\mathcal L_{\eta,\mu,k-1}(f)(t)$ is smooth, one has that
	    \begin{equation*}\begin{split}
	      \frac{d}{dt}\mathcal L_{\eta,\mu,k}(f)(t) & = \int \left [ f' \, \partial_t f  + f \, \partial_t  f' \right ]\,\frac{\left (v-w\right )_+^{k+2}}{ \left (x-y + \eta\right )^k}\, \chi_\mu (x-y) \,dv\,dw\,dx\,dy, \\
	       & = \int \left [ f' \Big ( -v \, \partial_x f -\partial_{vv} m\Big ) +f \left ( -w \, \partial_y  f' -\partial_{ww}m')\right ) \right ] \\ 
	       & \quad \quad \ \frac{\left (v-w\right )_+^{k+2}}{ \left (x-y + \eta\right )^k}\, \chi_\mu (x-y) \,dv\,dw\,dx\,dy.
\end{split}
	    \end{equation*}
	    Integrating by part the free transport terms of the last relation, with respect to $x$ and $y$, we obtain that for $k\geq 1$
	    \begin{equation*}\begin{split}
	      \frac{d}{dt}\mathcal L_{\eta,\mu,k}(f)(t) & = \int \bigg \{\left (v-w\right )_+^{k+3} f \, f'  \left [- k\,\frac{\chi_\mu(x-y)}{(x-y+\eta)^{k+1}} + \frac{\chi'_\mu (x-y)}{(x-y+\eta)^{k}} \right ] \\ 
	       &  - (k+1)\,(k+2)\,\left [f' \, m + f \, m'\right ] \frac{\left (v-w\right )_+^{k}}{ \left (x-y + \eta\right )^k}\, \chi_\mu (x-y) \bigg \} \, dv\,dw\,dx\,dy.
	   \end{split}
	    \end{equation*}
Recalling that $m\geq 0$, this leads to 
\begin{equation}
\frac{d}{dt}\mathcal L_{\eta,\mu,k}(f)(t) \leq \int \left (v-w\right )_+^{k+3} f \, f'  \left [ -k\,\frac{\chi_\mu(x-y)}{(x-y+\eta)^{k+1}} + \frac{\chi'_\mu (x-y)}{(x-y+\eta)^{k}} \right ]\,dx\,dy\,dv\,dw,\label{timederivative}
\end{equation}
and hence by \eqref{hypchi}
\[
\frac{d}{dt}\mathcal L_{\eta,\mu,k}(f)(t) \leq \frac{4}{\mu\,\eta^{k}}\,\int |v|^{k+3} f\,dx\,dv,
\]
which is bounded by \eqref{momentk+3}.

On the other hand if $k=0$ by the definition of $\mathcal L_{\eta,\mu,k}(f)$ and with similar calculations
\begin{equation*}\begin{split}
	      \frac{d}{dt}\mathcal L_{\eta,\mu,0}(f)(t) & = -\int_{x\leq 1+y+\eta} \bigg \{\left (v-w\right )_+^{3} f \, f'  \left [ \frac{\chi_\mu(x-y)}{(x-y+\eta)} + \chi'_\mu (x-y)\,\log(x-y+\eta)_- \right ] \\ 
	       &  + 2\,\left [f' \, m + f \, m'\right ] \ind_{v-w\geq 0}\,\log (x-y + \eta)_-\, \chi_\mu (x-y) \bigg \} \, dv\,dw\,dx\,dy.
	   \end{split}
	    \end{equation*}
Note that since $\log x\leq 0$ for $x\leq 1$ and $m\geq 0$, one has similarly in this case
\begin{equation}
\frac{d}{dt}\mathcal L_{\eta,\mu,0}(f)(t) \leq -\int_{x\leq 1+y+\eta}\!\!\!\!\!\! (v-w)_+^{3} f \, f'  \left [ \frac{\chi_\mu(x-y)}{(x-y+\eta)} + \chi'_\mu (x-y)\,\log(x-y+\eta) \right ]\,dx\,dy\,dv\,dw,\label{timederivativek=0}
\end{equation}
leading by \eqref{hypchi} to
\[
\frac{d}{dt}\mathcal L_{\eta,\mu,0}(f)(t) \leq \frac{4\,|\log \eta|}{\mu}\,\int |v|^{3} f\,dx\,dv.
\]
In all cases,  $\mathcal L_{\eta,\mu,k}(f)(t)$ is hence semi-Lipschitz and thus $BV$. 

Consider now any sequence $f_n$ of solutions to Eq. \eqref{basickinetic}. Observe that
\[
\sup_n \int_0^T\int_{\R^2} m_n(dt,dx,dv)=\sup_n \int_{\R^2} |v|^2\,(f_n^0(dx,dv)-f_n(T,dx,dv))<\infty.
\]
Therefore by \eqref{basickinetic}, $\partial_t f_n$ is bounded in $M^1_{loc}([0,\ T]\times\R^2)+L^\infty(W^{-1,1}_x L^1_x)$. That implies that $f_n$ is compact in $L^2([0,\ T])$ with values in some weak space. 

On the other hand the function $(v-w)_+^{k+2}\,(x-y+\eta)^{-k}\,\chi_\mu(x-y)$ is smooth ($C^\infty$) for any $\eta,\;\mu>0$. The uniform control on the moments of $f_n$ then implies that
\[
I_n(t,x,v)=\int_{\R^2} (v-w)_+^{k+2}\,(x-y+\eta)^{-k}\,\chi_\mu(x-y)\,f_n(t,dy,dw)
\] 
is compact in $L^2([0,\ T], C^1_{x,v})$. Therefore we can easily pass to the limit in 
\[
\int_s^t \int_{\R^2} f_n(r,dx,dv)\,I_n(r,x,v)\,dr= \int_s^t\mathcal L_{\eta,\mu,k}(f_n)(r) \,dr.
\]
This obviously cannot work for $L_{\eta,0+,k}(f_n)(t)$. However as $L_{\eta,\mu,k}(f_n)(t)$ is increasing in $\mu$, and by \eqref{limitmu0}
\[
\int_s^t \mathcal L_{\eta,0+,k}(f)(r) \,dr=\sup_{\mu>0}\int_s^t \mathcal L_{\eta,\mu,k}(f)(r) \,dr.
\] 
The supremum of any family of continuous functions is automatically lower semi-continuous thus finishing the proof.
\end{proof}
\subsection{Dissipation properties}
  
Our main goal is to use the dispersive properties of the free transport to bound $\mathcal L_{\eta,0+,k}(f)$ in terms of $\mathcal L_{\eta,0+,k-1}(f)$.
        \begin{theorem}
      \label{thmBoundL}
     Assume that $f\in L^\infty([0,\ T],\;M^1(\R^2))$ solves \eqref{basickinetic}, satisfies \eqref{traceonx} and has bounded moments in $v$ for some $k\geq 0$
\begin{equation}
\sup_{t\in[0,\ T]}\int_{\R^2} |v|^{k+2}\,f(t,x,v)\,dx\,dv<\infty.
\end{equation}
Then for any $\mu,\;\eta>0$, $0\leq s\leq t$ if $k\geq 2$ 
      \begin{equation}\label{generalk}
      k\, \int_s^t \mathcal L_{\eta,0+,k}(f)(r)\,dr+ \mathcal L_{\eta,0+,k-1}(f)(t-) \leq  \mathcal L_{\eta,0+,k-1}(f)(s+)+\frac{2}{\eta^{k-1}}\,\int_s^t\Lambda_{f,k+2}(r)\,dr,
      \end{equation}
and if $k=1$
\begin{equation}\label{k=1}
      \int_s^t \mathcal L_{\eta,0+,1}(f)(r)\,dr+ \mathcal L_{\eta,0+,0}(f)(t-) \leq  \mathcal L_{\eta,0+,0}(f)(s+)+2\,|\log\eta|\,\int_s^t\Lambda_{f,2}(r)\,dr.
      \end{equation}
    \end{theorem}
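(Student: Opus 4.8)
The plan is to integrate in time the differential inequality for the \emph{lower} functional $\mathcal{L}_{\eta,\mu,k-1}$ furnished by Lemma~\ref{basicproperty}, to read off a good dissipation proportional to $\mathcal{L}_{\eta,\mu,k}$, and then to send $\mu\to0$. Concretely, for $k\geq2$ I would use \eqref{timederivative} with $k-1$ substituted for $k$. There the transport integration by parts differentiates the weight $(x-y+\eta)^{-(k-1)}$, producing simultaneously the extra factor $(v-w)_+$ and the extra power $(x-y+\eta)^{-1}$; recognising the resulting term as $-(k-1)\,\mathcal{L}_{\eta,\mu,k}(f)$ and discarding the contributions carrying $m\geq0$ (which have a favourable sign), one is left with
\begin{equation*}
\frac{d}{dt}\mathcal{L}_{\eta,\mu,k-1}(f)(t)\ \leq\ -(k-1)\,\mathcal{L}_{\eta,\mu,k}(f)(t)+R_\mu(t),\qquad R_\mu(t):=\int (v-w)_+^{k+2}\,\frac{\chi_\mu'(x-y)}{(x-y+\eta)^{k-1}}\,f\,f'\,dv\,dw\,dx\,dy,
\end{equation*}
the coefficient of the dissipation being exactly the exponent of the differentiated weight. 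For $k=1$ the power weight is critical and would give a vanishing coefficient; this is precisely the role of the logarithm in \eqref{defFunctionalLk0}, whose $x$-derivative is $(x-y+\eta)^{-1}$, so that \eqref{timederivativek=0} yields the leading term $-\mathcal{L}_{\eta,\mu,1}(f)$ together with a remainder carrying $\chi_\mu'(x-y)\log(x-y+\eta)_-$.

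Next I would estimate the remainder by the trace. On $\supp\chi_\mu'$ one has $0<x-y<\mu$, hence $(x-y+\eta)^{-(k-1)}\leq\eta^{-(k-1)}$ (respectively $|\log(x-y+\eta)_-|\leq|\log\eta|$), and \eqref{hypchi} gives $\chi_\mu'\leq\frac2\mu\ind_{0<x-y<\mu}$; therefore
\begin{equation*}
R_\mu(t)\ \leq\ \frac{2}{\eta^{k-1}}\,\frac1\mu\int_{0<x-y<\mu}\!\!(v-w)_+^{k+2}\,f(t,x,v)\,f(t,y,w)\,dv\,dw\,dx\,dy.
\end{equation*}
After exchanging the dummy variables $(x,v)\leftrightarrow(y,w)$ this is an average of the kind appearing in \eqref{traceonx} (the two orientations $(v-w)_+$ and $(w-v)_+$ agreeing in the coincidence limit $y\to x$ by the $v\leftrightarrow w$ symmetry of the velocity integral), so the trace hypothesis \eqref{traceonx} yields $\limsup_{\mu\to0}R_\mu(t)\leq \tfrac{2}{\eta^{k-1}}\Lambda_{f,k+2}(t)$, and $\tfrac{2}{\eta^{k-1}}\Lambda_{f,k+2}\in L^1([0,T])$ dominates $\int_s^t R_\mu$.

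Finally I would integrate the differential inequality and pass to the limit. For fixed $\mu>0$, $\mathcal{L}_{\eta,\mu,k-1}(f)$ is $BV$ in time by Lemma~\ref{basicproperty} and its jumps point downward because $m\geq0$; integrating the inequality over $[s,t]$ between the one-sided traces gives
\begin{equation*}
(k-1)\int_s^t\mathcal{L}_{\eta,\mu,k}(f)(r)\,dr+\mathcal{L}_{\eta,\mu,k-1}(f)(t-)\ \leq\ \mathcal{L}_{\eta,\mu,k-1}(f)(s+)+\int_s^t R_\mu(r)\,dr.
\end{equation*}
Letting $\mu\to0$, the dissipation term converges to $(k-1)\int_s^t\mathcal{L}_{\eta,0+,k}(f)$ by \eqref{limitmu0}, the remainder is controlled by the previous step, and by the monotonicity of $\mu\mapsto\mathcal{L}_{\eta,\mu,k-1}$ from \eqref{defFunctionalLmu0} the one-sided traces increase to $\mathcal{L}_{\eta,0+,k-1}(f)(t-)$ and $\mathcal{L}_{\eta,0+,k-1}(f)(s+)$; this yields the announced estimate \eqref{generalk} (and, running the $k=1$ computation, \eqref{k=1}). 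The two places where care is genuinely needed — and which I expect to be the main obstacle — are the identification $\limsup_{\mu\to0}R_\mu=\tfrac{2}{\eta^{k-1}}\Lambda_{f,k+2}$ through the trace property, and the interchange of the supremum over $\mu$ with the left/right time traces of these $BV$ functions, which is exactly why the statement is phrased with $t-$ and $s+$ and why the semicontinuity part of Lemma~\ref{basicproperty} is needed.
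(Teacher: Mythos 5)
Your proposal follows the paper's proof essentially step for step: differentiate $\mathcal L_{\eta,\mu,k-1}$ along \eqref{basickinetic}, discard the $m$-terms by sign, bound the $\chi'_\mu$ remainder by $\frac{2}{\mu\,\eta^{k-1}}$ times the near-diagonal integral via \eqref{hypchi}, integrate in time between one-sided traces, and send $\mu\to0$ using Lemma \ref{basicproperty} together with the trace hypothesis \eqref{traceonx}. The one substantive discrepancy is the dissipation constant: your $(k-1)$ is what \eqref{timederivative} with $k-1$ in place of $k$ actually yields, whereas the paper's proof and the statement \eqref{generalk} assert $k$ --- this is an off-by-one slip in the paper rather than a gap in your argument, and it is harmless downstream, since the Gronwall-type iteration and the $k\to\infty$ limit in the proof of Theorem \ref{generalhydro} are insensitive to replacing $k$ by $k-1$ (note $k-1\geq k/2$ for $k\geq 2$, and your $k=1$ case matches \eqref{k=1} exactly). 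You are in fact more careful than the paper on the two delicate points you flag: the identification of the remainder with $\Lambda_{f,k+2}$ requires swapping $(x,v)\leftrightarrow(y,w)$ and invoking the $v\leftrightarrow w$ symmetry of the velocity integral in the coincidence limit, which the paper passes over silently, and the interchange of the supremum over $\mu$ with the left/right time traces, which the paper delegates wholesale to Lemma \ref{basicproperty}.
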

	  
	  \begin{proof}
The proof is straightforward after Lemma \ref{momentk+3}. We begin by working with $\mathcal L_{\eta,\mu,k}(f)$ for $\mu>0$. Differentiating in time, one again obtains Eq. \eqref{timederivative}, that is
\[
\frac{d}{dt}\mathcal L_{\eta,\mu,k-1}(f)(t) \leq -k\,\mathcal L_{\eta,\mu,k}(f)(t)+\int \left (v-w\right )_+^{k+2} f \, f'  \frac{\chi'_\mu (x-y)}{(x-y+\eta)^{k-1}}\,dx\,dy\,dv\,dw,
\]
for $k\geq 2$ and if $k-1=0$ by \eqref{timederivativek=0},
\[
\frac{d}{dt}\mathcal L_{\eta,\mu,0}(f)(t) \leq -\mathcal L_{\eta,\mu,1}(f)(t)-\int (v-w)_+^{3} f \, f'  \, \chi'_\mu (x-y)\,\log(x-y+\eta)_- \,dx\,dy\,dv\,dw.	   
\]
We now use the property \eqref{hypchi} to bound for $k\geq 2$
\[\begin{split}
&\int \left (v-w\right )_+^{k+2} f \, f'  \frac{\chi'_\mu (x-y)}{(x-y+\eta)^{k-1}}\,dx\,dy\,dv\,dw\leq \frac{2}{\mu\,\eta^{k-1}}\int_{x<y<x+\mu}  (v-w )_+^{k+2} f \, f'\,dx\,dy\,dv\,dw.
\end{split}\]
Therefore integrating in time between $s$ and $t$ the inequality above one has that
\[\begin{split}
\mathcal L_{\eta,\mu,k-1}(f)(t-)-\mathcal L_{\eta,\mu,k-1}(f)(s+)&\leq -k\, \int_s^t \mathcal L_{\eta,\mu,k}(f)(r)\,dr\\
&+\frac{2}{\mu\,\eta^{k-1}}\int_s^t\int_{x<y<x+\mu}  (v-w )_+^{k+2} f \, f'\,dx\,dy\,dv\,dw\,dr.
\end{split}
\]	    
Take the limit $\mu\rightarrow 0$ and observe that by its definition
\[
\limsup_{\mu\rightarrow0}\frac{1}{\mu}\int_s^t\int_{x<y<x+\mu}  (v-w )_+^{k+2} f \, f'\,dx\,dy\,dv\,dw\,dr\leq \int_s^t\Lambda_{f,k+2}(r)\,dr.
\]	    
The passage to the limit in $\mathcal L_{\eta,\mu,k-1}(f)$ and $\int_s^t \mathcal L_{\eta,\mu,k}(f)(r)\,dr$ is provided by Lemma \ref{basicproperty} which concludes the proof in that case. The case $k=1$ is handled similarly.
	  \end{proof}
	  
\subsection{The connection with monokinetic solutions}
It turns out that the functionals $\mathcal L_{\eta,\mu,k}(f)$ can control	the concentration in velocity of a solution to \eqref{basickinetic}. Roughly speaking it is not possible to have a bound on $\mathcal L_{\eta,\mu,k}(f)$ uniform in $\eta$ and $\mu$ if $f$ is not monokinetic. This is due to the fact that $(x-y)^k$ is not integrable if $k\geq 1$ and thus the only way to keep the integral bounded is to have $(v-w)_+$ small if $x$ is close to $y$. 

This is formalized in the following
\begin{proposition}\label{functionalimpliesmonokinetic}
 Assume that $f\in L^\infty([0,\ T],\;M^1(\R^2))$ solves \eqref{basickinetic}, and has bounded $v$-moments for some $k\geq 0$
\[
\sup_{t\in[0,\ T]}\int_{\R^2} |v|^{k+2}\,f(t,x,v)\,dx\,dv<\infty.
\]
Assume moreover that 
\[
\sup_{\mu,\eta} \int_0^T \mathcal L_{\eta,\mu,k}(f)(r)\,dr<\infty.
\]
Then $f$ is monokinetic for $a.e.\;t$: There exist $\rho\in L^\infty([0,\ T],\;M^1(\R))$, $u\in L^\infty([0,\ T],\;L^{k+2}(\rho))$ s.t. for $a.e.\;t$
\[
f(t,x,v)=\rho(t,x)\,\delta(v-u(t,x)).
\] 
\end{proposition}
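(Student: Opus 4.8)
The plan is to convert the uniform bound on $\mathcal L_{\eta,\mu,k}$ into a single finite weighted energy, read off from it that the velocity cannot genuinely spread on any spatially diffuse set, and finally remove the exceptional atoms using the dispersion of the free transport. First I would pass to the limit in the parameters. Since $\chi_\mu$ is non-increasing in $\mu$ with $\chi_\mu\uparrow\ind_{x>0}$ as $\mu\downarrow0$ and the integrand is non-negative, monotone convergence identifies
\[
\mathcal L_{\eta,0+,k}(f)(t)=\int_{x>y}\frac{(v-w)_+^{k+2}}{(x-y+\eta)^k}\,f(t,x,v)\,f(t,y,w)\,dv\,dw\,dx\,dy .
\]
Letting then $\eta\downarrow0$, the weight $(x-y+\eta)^{-k}$ increases to $(x-y)^{-k}$ on $\{x>y\}$, so a second monotone convergence together with $\sup_{\eta,\mu}\int_0^T\mathcal L_{\eta,\mu,k}(f)<\infty$ yields that $\int_0^T\mathcal E(t)\,dt<\infty$, where $\mathcal E(t)=\int_{x>y}(x-y)^{-k}(v-w)_+^{k+2}f(t,x,v)f(t,y,w)\,dv\,dw\,dx\,dy$. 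Thus $\mathcal E(t)<\infty$ for a.e. $t$; I fix such a $t$ and disintegrate $f(t,dx,dv)=\rho_t(dx)\,\nu_{t,x}(dv)$ against its spatial marginal $\rho_t$, the $\nu_{t,x}$ being probability measures.

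Next I would show the diffuse part is monokinetic by slicing in velocity. For rationals $\lambda$ and $\delta,c>0$ set $E=\{x:\nu_{t,x}((\lambda+\delta,\infty))\ge c\text{ and }\nu_{t,x}((-\infty,\lambda-\delta))\ge c\}$. Keeping in $\mathcal E(t)$ only the pairs with $x>y$ in $E$, $v>\lambda+\delta$ at $x$ and $w<\lambda-\delta$ at $y$ (so $(v-w)_+>2\delta$) gives
\[
\mathcal E(t)\ge (2\delta)^{k+2}c^2\int_{x>y,\ x,y\in E}\frac{\rho_t(dx)\,\rho_t(dy)}{(x-y)^k}.
\]
The finiteness of the left-hand side forces the Riesz-type $k$-energy of $\rho_t|_E$ to be finite; since for $k\ge1$ the $k$-energy of a non-atomic measure on $\R$ is infinite (the singularity $(x-y)^{-k}$ is non-integrable against a continuous distribution function), $\rho_t|_E$ must be purely atomic. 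Taking the union over the countably many rational triples $(\lambda,\delta,c)$, the set $\{x:\nu_{t,x}\text{ is not a Dirac mass}\}$ is, up to a $\rho_t$-null set, contained in the countable set of atoms of $\rho_t$. Hence $\nu_{t,x}=\delta_{u(t,x)}$ for $\rho_t$-a.e. $x$ that is not an atom of $\rho_t$.

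The main obstacle is to exclude, for a.e. $t$, that $\rho_t$ carries an atom $x_0$ with a non-Dirac velocity law. This is genuinely delicate, because the self-interaction of such an atom lives on the diagonal $x=y$, where $\chi_\mu$ vanishes, so the static bound does not see it at all. The mechanism that removes it is dispersion: a concentrated atom carrying positive internal energy cannot stay concentrated under the flow. Indeed the momentum balance $\partial_t(\rho_t u)+\partial_x\!\int v^2 f=0$, with $\int v^2 f>\rho_t u^2$ wherever $\nu_{t,x}$ is spread, drives the mass $m_0\,\delta_{x_0}\otimes\nu_{t_0,x_0}$ present at a time $t_0$ to the positions $x_0+v(t-t_0)$, so that for $t>t_0$ the relevant pairs satisfy $x-y=(v-w)(t-t_0)$ and
\[
\mathcal L_{\eta,0+,k}(f)(t)\ \gtrsim\ \frac{1}{(t-t_0)^{k}}\int_{v>w}(v-w)^2\,\nu_{t_0,x_0}(dv)\,\nu_{t_0,x_0}(dw),
\]
the right-hand side being strictly positive precisely because $\nu_{t_0,x_0}$ is not a Dirac mass. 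The collision term $-\partial_{vv}m$ only dissipates (it enters $\tfrac{d}{dt}\mathcal L$ with a favourable sign, as in the proof of Lemma~\ref{basicproperty}), so it can lower this contribution only by genuinely destroying the velocity spread; the conservation of mass and momentum of the atom bounds the rate at which this can happen, and I expect the $(t-t_0)^{-k}$ singularity to make $\int_0^T\mathcal L_{\eta,0+,k}(f)\,dt$ diverge near $t_0$ unless the set of times carrying such an atom is Lebesgue-null. Carrying this local-in-time/density-point argument out rigorously is the hard part; once it is done, Step~2 upgrades to $\nu_{t,x}=\delta_{u(t,x)}$ for $\rho_t$-a.e. $x$ at a.e. $t$.

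Finally I would assemble the statement. For a.e. $t$ one has $f(t,x,v)=\rho_t(dx)\,\delta(v-u(t,x))$, and the selection $u(t,x)=\int v\,\nu_{t,x}(dv)$ is jointly measurable by the measurable dependence of the disintegration. As $\rho_t$ is the marginal of $f(t)\in M^1(\R^2)$ we get $\rho\in L^\infty([0,T],M^1(\R))$, while
\[
\int_{\R}|u(t,x)|^{k+2}\,\rho_t(dx)=\int_{\R^2}|v|^{k+2}\,f(t,dx,dv)
\]
is bounded uniformly in $t$ by the moment hypothesis, giving $u\in L^\infty([0,T],L^{k+2}(\rho))$. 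The case $k=0$ is special, since the logarithmic weight of \eqref{defFunctionalLk0} no longer makes the diffuse energy infinite; there I would first bootstrap to $k=1$ by means of the dissipation estimate of Theorem~\ref{thmBoundL} and then run the argument above.
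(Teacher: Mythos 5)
Your treatment of the diffuse part is correct and takes a genuinely different route from the paper's Step 1: where the paper applies Jensen's inequality twice, averages against the kernels $K_{\eta}\star\rho$ and invokes Besicovitch differentiation, you slice in velocity and reduce everything to the fact that a non-atomic measure on $\R$ has infinite Riesz $k$-energy for $k\geq 1$. That fact is true, but your parenthetical justification (``non-integrable against a continuous distribution function'') is not a proof: continuity of $F(x)=\rho_t((-\infty,x])$ gives no lower bound on $x-y$ in terms of $F(x)-F(y)$ (think of Cantor-type measures with large gaps), and the honest argument is potential-theoretic --- finite energy forces $\rho_t(B(x,r)\setminus\{x\})\leq U(x)\,r^{k}$ for the potential $U$, hence a bounded density on $\{U\leq M\}$, and a nontrivial bounded density has infinite energy at its Lebesgue density points. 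Note also that both your argument and the paper's genuinely need $k\geq 1$ (for $k=0$ the logarithmic weight of \eqref{defFunctionalLk0} is locally integrable, and in the paper $\alpha_\eta$ stays bounded); the paper only ever applies the proposition with $k=1$, and your proposed $k=0$ bootstrap through Theorem \ref{thmBoundL} is not admissible here, since Proposition \ref{functionalimpliesmonokinetic} does not assume the trace condition \eqref{traceonx}, and even with it the resulting bound carries $2\,|\log\eta|\int_0^T\Lambda_{f,2}(t)\,dt$, which is uniform in $\eta$ only when $\Lambda$ vanishes.

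The genuine gap is the atomic part, and you say so yourself: your proof is an acknowledged heuristic precisely at the heart of the proposition. Moreover, two of the mechanisms you invoke would not close it. The favourable sign of the $-\partial_{vv}m$ term in $\frac{d}{dt}\mathcal L_{\eta,\mu,k}$ (Lemma \ref{basicproperty}) is an upper-bound statement and is useless when you need a \emph{lower} bound on $\mathcal L$; and it is not conservation of the atom's momentum that limits the collisional destruction of the velocity spread, but simply the finite total mass of $m$ (integrate \eqref{basickinetic} against $|v|^2$), which makes $\int_{t_0+}^{t}\int m\to 0$ as $t\downarrow t_0$. The paper's rigorous version runs as follows: $f$ is $BV$ in time, so at a putative bad time there is a right trace $f(t_0+)=g+\rho_0\,\delta(x-x_0)\,M_0(v)$ with $M_0$ not a Dirac mass; one chooses smooth $\varphi_1,\varphi_2$ supported in velocity intervals $I_1,I_2$ with $\inf\{(v-w)_+:v\in I_1,\,w\in I_2\}\geq C_*>0$ and $\int\varphi_i\,dM_0\geq 1/3$; one then tracks $\int\varphi_i\,f$ on the moving windows $[x_0+\underline{v_i}\,(t-t_0),\ x_0+\overline{v_i}\,(t-t_0)]$, where after one integration by parts the transport fluxes have a sign, so each window keeps mass at least $\rho_0/6$ on some interval $(t_0,t_c)$; since the windows are separated by at least $C_*(t-t_0)$, this yields $\int_{t_0}^{t_c}\mathcal L_{\eta,0+,k}(f)\,dt\gtrsim \rho_0^2\,C_*^{k+2}\,(\mu+\eta)^{-(k-1)}$ (a logarithm when $k=1$). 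Note that the contradiction is a blow-up in the parameters $\mu,\eta$ over a \emph{fixed} time window --- a single bad trace time suffices, and you do not need a positive-measure set of such times, contrary to what your ``unless the set of times carrying such an atom is Lebesgue-null'' suggests. Supplying (i) the time traces, (ii) the moving-window mass estimate with the small-mass control on $m$, and (iii) this parameter divergence is exactly the missing content of your sketch.
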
     
\begin{proof}
First of all notice that it is always possible to define $\rho$ and $u$ by
\[
\rho(t,x)=\int_\R f(t,x,dv),\quad \rho\,u(t,x)=\int_\R v\,f(t,x,dv).
\]
One necessarily has that $u\in L^\infty([0,\ T],\;L^{k+2}(\rho))$ because 
\[
u(t,x)=\int_\R v\,\frac{f(t,x,dv)}{\rho(t,x)},
\]
and 
by Jensen inequality
\[
\int_\R |u(t,x)|^{k+2}\,\rho(t,dx)\leq \int_{\R^2} |v|^{k+2}\,f(t,x,dv).
\]
Furthermore by \eqref{basickinetic}, $f$ is $BV$ in time with value in a weak space in $x$ and $v$ (as in the proof of Theorem \ref{thmBoundL}) and using the moments this proves that $\rho$ and $\rho\,u$ are also $BV$ in time.

Radon-Nikodym theorem implies that it is possible to decompose $f$ according to $\rho$
\[
f(t,x,v)=\rho(t,x)\,M(t,x,v),
\]
and the goal is thus to prove that $M$ is concentrated on a Dirac mass. We proceed in two steps by considering the atomic and non-atomic parts of $\rho$. We write accordingly
\[
\rho(t,x)=\sum_{n=1}^\infty \rho_n(t)\,\delta(x-x_n(t))+\tilde\rho(t,x),
\] 
where $\tilde \rho$ does not contain any Dirac mass.

\noindent {\bf Step 1: Control of the non-atomic part.} This part does not require any further use of  Eq. \eqref{basickinetic}. Start by remarking that by Jensen's inequality again
\[
\int_{\R^2} \frac{(u(x)-u(y))_+^{k+2}}{(x-y+\eta)^k}\,\chi_\mu(x-y)\,\rho(t,dx)\,\rho(t,dy) \leq  \mathcal L_{\eta,\mu,k}(f)(t).
\]
Instead of replacing both $v$ and $w$ in $\mathcal L_{\eta,\mu,k}(f)(t)$, it is also possible to use Jensen's inequality to replace only $v$ for instance. Thus
one has as well
\[
\int_{\R^3} \frac{(v-u(y))_+^{k+2}}{(x-y+\eta)^k}\,\chi_\mu(x-y)\,f(t,dx,dv)\,\rho(t,dy) \leq  \mathcal L_{\eta,\mu,k}(f)(t).
\]
Now $(a+b)^k\leq 2^k\,(a^k+b^k)$ and combining the two previous inequalities
\[\begin{split}
&\int_{\R^3} \frac{(v-u(x))_+^{k+2}}{(x-y+\eta)^k}\,\chi_\mu(x-y)\,f(t,dx,dv)\,\rho(t,dy) \\
&\quad=  \int_{\R^3} \frac{(v-u(y)+u(y)-u(x))_+^{k+2}}{(x-y+\eta)^k}\,\chi_\mu(x-y)\,f(t,dx,dv)\,\rho(t,dy)\\
&\quad\leq 2^k\,\int_{\R^3} \frac{(v-u(y))_+^{k+2}}{(x-y+\eta)^k}\,\chi_\mu(x-y)\,f(t,dx,dv)\,\rho(t,dy)\\
&\qquad+2^k\,\int_{\R^2} \frac{(u(x)-u(y))_+^{k+2}}{(x-y+\eta)^k}\,\chi_\mu(x-y)\,\rho(t,dx)\,\rho(t,dy)\\
&\quad\leq2^{k+1}\,\mathcal L_{\eta,\mu,k}(f)(t).
\end{split}
\]
In the left-hand side, only $\rho(t,dy)$ depends on $y$ and this leads to define 
\[
\alpha_{\mu,\eta}=\int_{\R} \frac{\chi_\mu(x)}{(x+\eta)^k}\,dx,\quad K_{\mu,\eta}=\frac{\alpha_{\mu,\eta}^{-1}\,\chi_\mu(x)}{(x+\eta)^k}, \quad\alpha_\eta=\lim_{\mu\rightarrow 0} \alpha_{\mu,\eta},\quad K_\eta=\lim_{\mu\rightarrow 0} K_{\mu,\eta}=\frac{\alpha_{\eta}^{-1}\,\ind_{x>0}}{(x+\eta)^k}.
\]
The previous inequality can be written as
\[
\int_{\R^2} (v-u(x))_+^{k+2}\,K_{\mu,\eta}\star\rho(t,x)\,f(t,dx,dv)\leq \alpha_{\mu,\eta}^{-1}\,2^{k+1}\,\mathcal L_{\eta,\mu,k}(f)(t).
\]
One has that $\lim_{\mu\rightarrow 0} K_{\mu,\eta}\star\rho=\sup_\mu K_{\mu,\eta}\star\rho=K_{\eta}\star\rho$. Note that $K_\eta$ is not continuous and in particular it is defined with $\ind_{x>0}$ and not $\ind_{x\geq 0}$. This makes a difference if $\rho$ contains Dirac masses and as we will see it is the reason why additional calculations are required for the atomic part.

In the meantime integrating in time, taking the supremum in $\mu$ and using the decomposition of $f$, one obtains that
\[
\int_0^T\int_{\R^2} (v-u(t,x))_+^{k+2}\,K_{\eta}\star\rho(t,x)\,\rho(t,dx)\,M(t,x,dv)\,dt\leq \int_0^T\alpha_{\eta}^{-1}\,2^{k+1}\,\mathcal L_{\eta,0+,k}(f)(t)\,dt\longrightarrow 0,
\]
as $\eta\rightarrow 0$, since $x^{-k}$ is not integrable for $k\geq 1$ and thus $\alpha_\eta\rightarrow +\infty$. Therefore for $\rho(t,dx)\,dt$ almost every point $t$ and $x$ s.t.
\begin{equation}
\liminf_{\eta\rightarrow 0}K_\eta\star \rho(t,x)>0,\label{halfdensity}
\end{equation}
then one must have that the support of $M(t,x,.)$ in $v$ is included in $(-\infty,\ u(t,x)]$. However by their definition, one has that for $\rho(t,dx)\,dt$ almost every point $t$ and $x$
\[
\int_\R M(t,x,dv)=u(t,x).
\]
Thus at such points $t$ and $x$ s.t. \eqref{halfdensity} holds, one must have that $M(t,x,v)=\delta(v-u(t,x))$ which is our goal.

In this argument, we treated differently $x$ and $y$ in $\mathcal L_{\eta,0+,k}(f)(t)$. We can make the symmetric argument, deducing that for $\rho(t,dy)\,dt$ almost every point $t$ and $y$ s.t.
\[
\liminf_{\eta\rightarrow 0}\int_\R K_\eta(x-y)\, \rho(t,dx)>0,
\] 
then  the support of $M(t,y,.)$ in $w$ is included in $[u(t,x),\ +\infty)$ and again one must have that $M(t,y,w)=\delta(w-u(t,x))$.

Combining those two arguments, we deduce that  $M(t,x,v)=\delta(v-u(t,x))$ for $\rho(t,dx)\,dt$ almost every point $t$ and $x$ s.t.
\[
\liminf_{\eta\rightarrow 0}\int_\R (K_\eta(x-y)+K_\eta(y-x))\,\rho(t,dy)=\liminf_{\eta\rightarrow 0}\int_{y\neq x}  \frac{\rho(t,dy)}{(|x-y|+\eta)^k}>0.
\] 
We emphasize that $\rho(t,dy)$ is only integrated on $y\neq x$ so that a Dirac mass at $x$ in $\rho$ does not contribute to the previous integral.  Finally
\[
\int_{y\neq x}  \frac{\rho(t,dy)}{(|x-y|+\eta)^k}\geq 2\,\eta^{-1}\int_{B(x,\eta),\;y\neq x} \rho(t,dy),
\] 
yielding
\begin{equation}
M(t,x,v)=\delta(v-u(t,x))\quad\mbox{for}\ \rho\,dt\ a.e.\;t,\,x\ \mbox{s.t.}\ \liminf_{\eta\rightarrow 0} \eta^{-1}\int_{B(x,\eta),\;y\neq x} \rho(t,dy)>0.  \label{conditionmonokinetic}
\end{equation}
To conclude this step, use the classical Besicovitch derivation theorem which implies that for $dt\,\tilde\rho$ $a.e.$ $t,\;x$ then 
\[
\liminf_{\eta\rightarrow 0} \frac{1}{2\,\eta}\,\int_{B(x,\eta),\;y\neq x} \tilde\rho(t,dy)=\liminf_{\eta\rightarrow 0} \frac{1}{2\,\eta}\,\int_{B(x,\eta)} \tilde\rho(t,dy)=\lim_{\eta\rightarrow 0} \frac{1}{2\,\eta}\,\int_{B(x,\eta)} \tilde\rho(t,dy)>0,
\]
as $\tilde\rho$ does not have any Dirac mass. 

This means that for $dt\,\tilde\rho$ $a.e.$ $t,\;x$, $M(t,x,v)=\delta(v-u(t,x))$ and
\[
f(t,x,v)=\tilde\rho(t,x)\,\delta(v-u(t,x))+\sum_{n=1}^\infty \rho_n(t)\,\delta(x-x_n(t))\,M(t,x_n,v).
\]

\medskip

\noindent{\bf Step 2: Control of the atomic part.} As noticed the previous step does not control the atomic part of $f$. Given that $f$ is $BV$ in time, by contradiction if $f$ is not monokinetic at $a.e.\ t$ then there exists $t_0$, $x_0$, $\rho_0>0$ and $M_0(v)\neq \delta(v-u(t_0,x_0))$ s.t.
\[
f(t_0+,x,v)=g+\rho_0\,\delta(x-x_0)\,M_0(v),\quad g\geq0,\quad\int_\R M_0(dv)=1.
\]
The main idea then is to use Eq. \eqref{basickinetic} to show that in that case the atom at $x_0$ has to split at $t>t_+$. The corresponding pieces will now necessarily interact in $\mathcal L_{\eta,0+,k}(f)(t)$, not being at the same point and this will lead to a contradiction. 

Since $M_0$ is not a Dirac mass, it is possible to find two smooth non-negative functions $\varphi_1$ and $\varphi_2$, supported on distinct intervals $I_1$ and $I_2$ s.t.
		  \begin{equation}
		    \label{defI1I2}
		    \inf \left \{ (v - w)_+ \,: \, v \in I_1, w \in I_2 \right \} \geq C_* >0,
		  \end{equation}
and
\[
\inf_{i=1,\;2}\int_\R M_0(dv)\,\varphi_i(v)\geq \frac{1}{3}.
\]
Denote these intervals as 
		    $I_i := \left [\underline{v_i}, \, \overline{v_i} \right ]$, and calculate using Eq. \eqref{basickinetic} for $t>t_0$		  
		  \begin{align*}
		    &\frac{d}{dt} \int_{x_0 + \underline{v_i} \, (t-t_0)}^{x_0 + \overline{v_i} \, (t-t_0)} \int_\R  \varphi_i(v) \, f(t,dx,dv)  = \int_{\R} \overline{v_i}\,\varphi_i \,f\left (t,x_0\!+\! \overline{v_i} \, (t\!-\!t_0), dv \right )\\
&\qquad - \int_{\R} \underline{v_i}\,\varphi_i \,f\left (t,x_0\!+\! \underline{v_i} \, (t\!-\!t_0), dv \right)- \int_{x_0 + \underline{v_i} \, (t-t_0)}^{x_0 + \overline{v_i} \, (t-t_0)} \int_\R  \varphi_i(v) \, v\cdot\partial_x f(t,dx,dv)\\
&\qquad -  \int_{x_0 + \underline{v_j} \, t}^{x_0 + \overline{v_j} \, t} \int_v  \varphi_i(v) \, \partial_{vv} \mu(t,dx,dv). 
		  \end{align*}  
Integrating by part the term in $v\partial_xf$, we find
\begin{align*}
		    &\frac{d}{dt} \int_{x_0 + \underline{v_i} \, (t-t_0)}^{x_0 + \overline{v_i} \, (t-t_0)} \int_\R  \varphi_i(v) \, f(t,dx,dv)  = \int_{\R} (\overline{v_i}-v)\,\varphi_i \,f\left (t,x_0\!+\! \overline{v_i} \, (t\!-\!t_0), dv \right )\\
&\qquad \int_{\R} (v-\underline{v_i})\,\varphi_i \,f\left (t,x_0\!+\! \underline{v_i} \, (t\!-\!t_0), dv \right) -  \int_{x_0 + \underline{v_j} \, t}^{x_0 + \overline{v_j} \, t} \int_v  \varphi_i(v) \, \partial_{vv} m(t,dx,dv). 
		  \end{align*}  
Since $\varphi_i$ is supported on the interval $I_i$, we have there that $\overline{v_i}-v\geq 0$ and $v-\underline{v_i}\geq 0$ so integrating between $t_0$ and $t$
\[
\int_{x_0 + \underline{v_i} \, (t-t_0)}^{x_0 + \overline{v_i} \, (t-t_0)} \int_\R  \varphi_i(v) \, f(t,dx,dv) \geq \rho_0\,\int_\R \varphi_i(v)\,M_0(dv)-\int_{t_0+}^t\int_{\R^2} |\partial_{vv}\varphi_i|\,m(dt,dx,dv),
\]
and hence for some constant $C>0$
\[
\int_{x_0 + \underline{v_i} \, (t-t_0)}^{x_0 + \overline{v_i} \, (t-t_0)} \int_\R  \varphi_i(v) \, f(t,dx,dv) \geq \frac{\rho_0}{3}-C\,\int_{t_0+}^t\int_{\R^2} m(dt,dx,dv).
\]
The measure $m$ has finite total mass as it can be checked by integrating Eq. \eqref{basickinetic} against $|v|^2$
\[
\int_0^T\int_{\R^2} m(dt,dx,dv)\leq \int_{\R^2} |v|^2\,f^0(dx,dv)<\infty. 
\]		 
In particular this implies that
\[
\int_{t_0+}^t\int_{\R^2} m(dt,dx,dv)\longrightarrow 0,\quad \mbox{as}\ t\rightarrow t_0,
\]
and that there exists a critical time $t_c>t_0$ s.t.
 \[
\int_{t_0+}^{t_c}\int_{\R^2} m(dt,dx,dv)\leq \frac{\rho_0}{6\,C}.
\]
Consequently for any $t_0<t<t_c$
\begin{equation}
\inf_{i=1,\;2}\int_{x_0 + \underline{v_i} \, (t-t_0)}^{x_0 + \overline{v_i} \, (t-t_0)} \int_\R  \varphi_i(v) \, f(t,dx,dv) \geq \frac{\rho_0}{6}.\label{massonintervals}
\end{equation}
Inserting this decomposition in $\mathcal L_{\eta,\mu,k}(f)(t)$
\[\begin{split}
&\int_{t_0}^{t_c}\mathcal L_{\eta,0+,k}(f)(t)\,dt\\
&\ \geq \int_{t_0}^{t_c}\int_{x_0 + \underline{v_1} \, (t-t_0)}^{x_0 + \overline{v_1}}\int_{x_0 + \underline{v_2} \, (t-t_0)}^{x_0 + \overline{v_2}}\int_{\R^2}  \frac{(v-w)_+^{k+2}}{( x-y + \eta)^k}\, \chi_\mu (x-y) \,\varphi_1(v)\,\varphi_2(w) f(t,dx,dv) f(t,dy,dw)\,dt\\
&\ \geq \int_{t_0}^{t_c}\int_{x_0 + \underline{v_1} \, (t-t_0)}^{x_0 + \overline{v_1}}\int_{x_0 + \underline{v_2} \, (t-t_0)}^{x_0 + \overline{v_2}}\int_{\R^2}  \frac{C_*^{k+2}}{( x-y + \eta)^k}\, \chi_\mu (x-y) \,\varphi_1(v)\,\varphi_2(w) f(t,dx,dv) f(t,dy,dw)\,dt,
\end{split}
\]
by \eqref{defI1I2} since $\varphi_i$ is supported in $I_i$.

If $x\in [x_0+\underline{v_1} \, (t-t_0),\ x_0+\overline{v_1} \, (t-t_0)]$ and $y\in [x_0+\underline{v_2} \, (t-t_0),\ x_0+\overline{v_2} \, (t-t_0)]$, then 
by \eqref{defI1I2}
\[
x-y\geq (\underline{v_1}-\overline{v_2})\,(t-t_0)\geq C_*\,(t-t_0).
\]
 Therefore
\[\begin{split}
&\int_{t_0}^{t_c}\mathcal L_{\eta,0+,k}(f)(t)\,dt\\
&\ \geq \int_{t_0+\mu/C_*}^{t_c}\int_{x_0 + \underline{v_1} \, (t-t_0)}^{x_0 + \overline{v_1}}\int_{x_0 + \underline{v_2} \, (t-t_0)}^{x_0 + \overline{v_2}}\int_{\R^2}  \frac{C_*^{k+2}}{(C_*\,(t-t_0) + \eta)^k}\, \,\varphi_1(v)\,\varphi_2(w) f(t,dx,dv)\, f(t,dy,dw)\,dt\\
&\ \geq \int_{t_0+\mu/C_*}^{t_c} \frac{C_*^{k+2}}{(C_*\,(t-t_0) + \eta)^k}\,\frac{\rho_0^2}{36},
\end{split}
\] 
by \eqref{massonintervals}.
Finally if $k>1$ this implies that
\[
\int_{t_0}^{t_c}\mathcal L_{\eta,0+,k}(f)(t)\,dt\geq \frac{\rho_0^2}{36}\,\frac{C_*^{k+2}}{k\,(\mu + \eta)^{k-1}},
\]		  
and if $k=1$
\[
\int_{t_0}^{t_c}\mathcal L_{\eta,0+,k}(f)(t)\,dt\geq - \frac{\rho_0^2}{36}\,C_*^{3}\,\log(\mu + \eta).
\]
In both cases, one obtains that
\[
\sup_{\eta,\mu} \int_{0}^{T}\mathcal L_{\eta,0+,k}(f)(t)\,dt=\infty,
\]
which is a contradiction.
\end{proof}	    
	    
  \section{Hydrodynamic Limit: Proof of Theorem \ref{mainresult}} 
    \label{secHydro}
  	  
		\subsection{A general Hydrodynamic Limit }
We prove here a more general version of Theorem \ref{mainresult} which can apply to many different systems. 
\begin{theorem}
Assume that one has a sequence $f_\eps\in L^\infty([0,\ T],\ M^1(\R^2))$ of solutions to \eqref{basickinetic} with mass $1$ for a corresponding sequence of non negative measures $m_\eps$. Assume that all $v$-moments of $f_\eps$ are bounded uniformly in $\eps$: For any $k$
\[
\sup_\eps \sup_{t\in[0,\ T]} \int_{\R^2} |v|^{k}\,f_\eps (t,dx,dv)<\infty,
\]
together with one moment in $x$, for instance
\[
\sup_\eps \sup_{t\in[0,\ T]} \int_{\R^2} |x|^2\,f_\eps(t,dx,dv)<\infty.
\]
Assume moreover that $f_\eps$ satisfies the condition \eqref{traceonx}
with
\begin{equation}
\int_0^T \Lambda_{f_\eps,k}(t)\,dt\longrightarrow 0,\quad\mbox{as}\ \eps\rightarrow 0\ \mbox{for any fixed}\ k,\label{lambdavanish}
\end{equation}
with finally that 
\begin{equation}
\sup_\eps \sup_{\eta,\mu}\,\mathcal L_{\eta,\mu,0}(f_\eps)(t=0)<\infty.\label{L0bounded}
\end{equation}
Then any weak-* limit $f$ of $f_\eps$ solves the sticky particles dynamics in the sense that $\rho=\int_\R f(t,x,dv)$ and $j=\int_\R v\,f(t,x,dv)=\rho\,u$ are a distributional solution to the pressureless system  \eqref{sysSticky} while $u$ has the Oleinik property for any $t>0$
\begin{equation}
u(t,x)-u(t,y)\leq \frac{x-y}{t},\quad\mbox{for } \rho\;a.e.\ x\geq y. \label{ineqOleinik}
\end{equation}
\label{generalhydro}
\end{theorem}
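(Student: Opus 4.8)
The plan is to combine the two main tools developed in Section \ref{secFunc}: the dissipation inequalities of Theorem \ref{thmBoundL}, which propagate a bound on $\mathcal L_{\eta,0+,k}$ downward to $\mathcal L_{\eta,0+,k-1}$, and Proposition \ref{functionalimpliesmonokinetic}, which converts a uniform bound on $\mathcal L_{\eta,\mu,k}$ into the monokinetic structure of the limit. First I would establish compactness: the uniform bounds on all $v$-moments and on the second $x$-moment, together with Equation \eqref{basickinetic} (which controls $\partial_t f_\eps$ as in the proof of Lemma \ref{basicproperty}), give weak-$*$ compactness of $f_\eps$ in $L^\infty([0,\ T],\ M^1(\R^2))$, so some subsequence converges to a limit $f$. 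Passing to the limit in the weak formulation of \eqref{basickinetic} tested against functions of $(t,x)$ alone (which kills the $\partial_{vv}m_\eps$ term) yields the two conservation laws $\partial_t\rho+\partial_x(\rho\,u)=0$ and $\partial_t(\rho\,u)+\partial_x\int v^2 f=0$; the closure $\int v^2 f=\rho\,u^2$ will follow once $f$ is shown to be monokinetic.

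The central step is to obtain a uniform-in-$(\eta,\mu)$ bound on $\int_0^T\mathcal L_{\eta,\mu,k}(f)(t)\,dt$ for the limit $f$. Here I would iterate Theorem \ref{thmBoundL}, starting from hypothesis \eqref{L0bounded}, which bounds $\mathcal L_{\eta,\mu,0}(f_\eps)$ at $t=0$. Applying \eqref{k=1} controls $\int_s^t\mathcal L_{\eta,0+,1}(f_\eps)$ in terms of $\mathcal L_{\eta,0+,0}(f_\eps)(s+)$ plus a term $2|\log\eta|\int_s^t\Lambda_{f_\eps,2}$; then \eqref{generalk} for $k=2$ controls $\int_s^t\mathcal L_{\eta,0+,2}(f_\eps)$ in terms of $\mathcal L_{\eta,0+,1}$, and so on up the ladder. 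The crucial feature is the factor $k$ multiplying the dissipation integral in \eqref{generalk}: rearranging gives
\begin{equation*}
k\int_s^t\mathcal L_{\eta,0+,k}(f_\eps)(r)\,dr\leq \mathcal L_{\eta,0+,k-1}(f_\eps)(s+)+\frac{2}{\eta^{k-1}}\int_s^t\Lambda_{f_\eps,k+2}(r)\,dr.
\end{equation*}
By \eqref{lambdavanish} every $\Lambda_{f_\eps,k}$-integral vanishes as $\eps\rightarrow0$, so in the limit these trace contributions disappear. The lower semi-continuity of $\int_s^t\mathcal L_{\eta,0+,k}(f)$ under weak-$*$ convergence, established in Lemma \ref{basicproperty}, then transfers the resulting bound to the limit $f$, giving $\sup_{\eta,\mu}\int_0^T\mathcal L_{\eta,\mu,k}(f)(t)\,dt<\infty$ for every $k$.

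With that bound in hand, Proposition \ref{functionalimpliesmonokinetic} immediately yields $f=\rho(t,x)\,\delta(v-u(t,x))$ for a.e.\ $t$, which supplies the missing closure $\int v^2 f=\rho\,u^2$ and completes the derivation of \eqref{sysSticky}. For the Oleinik bound \eqref{ineqOleinik}, I would return to the finite-$k$ estimate and read it as a quantitative dispersive statement: substituting the monokinetic form into $\mathcal L_{\eta,0+,k}(f)(t)$ produces $\int (u(x)-u(y))_+^{k+2}(x-y+\eta)^{-k}\chi_\mu(x-y)\,\rho\,\rho$, and the bound $\int_s^t\mathcal L_{\eta,0+,k}(f)\lesssim 1/k$ forces, after letting $\eta,\mu\rightarrow0$ and then $k\rightarrow\infty$ (taking $(k+2)$-th roots to extract the essential supremum of the ratio), that $(u(t,x)-u(t,y))_+\leq (x-y)/t$ for $\rho$-a.e.\ $x\geq y$. \emph{The main obstacle} I anticipate is precisely this last passage: controlling the constants uniformly as $k\rightarrow\infty$ so that the $k$-dependence in the denominator of the dissipation bound sharpens into the exact slope $1/t$ rather than merely a finite semi-Lipschitz constant, and handling the atomic part of $\rho$ (where the $\ind_{x>0}$ versus $\ind_{x\geq0}$ distinction noted in Proposition \ref{functionalimpliesmonokinetic} matters) so that the bound holds for $\rho$-a.e.\ pair including concentration points.
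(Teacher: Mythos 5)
Your Step 1 (compactness, passage to the limit in the two local conservation laws) and your use of Proposition \ref{functionalimpliesmonokinetic} match the paper, but your central mechanism --- iterating Theorem \ref{thmBoundL} ``up the ladder'' rung by rung --- has a genuine gap, and it sits exactly where you flag your ``main obstacle''. The inequality \eqref{generalk} at level $k\geq 2$ requires the \emph{pointwise-in-time} value $\mathcal L_{\eta,0+,k-1}(f_\eps)(s+)$ at the starting time, whereas the previous rung only delivers a \emph{time-integrated} bound on $\int_s^t\mathcal L_{\eta,0+,k-1}(f_\eps)(r)\,dr$, and the hypotheses give no initial-data control beyond $k=0$: \eqref{L0bounded} bounds only $\mathcal L_{\eta,\mu,0}$ at $t=0$. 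One can patch this by a mean-value selection of good intermediate times, but the constants then accumulate geometrically in $k$ (roughly like $(c/t)^{k-1}$ for some $c>1$), so after taking $k$-th roots your final step yields a semi-Lipschitz bound with slope $c/t$, not the exact Oleinik slope $1/t$; your proposal offers no mechanism to remove this loss. Note also that the full ladder is not needed for monokineticity: the paper applies only the $k=1$ inequality \eqref{k=1} with $s=0$ --- where \eqref{L0bounded} is precisely the required initial term --- passes to the limit in $\eps$ via Lemma \ref{basicproperty} using \eqref{lambdavanish}, and then invokes Proposition \ref{functionalimpliesmonokinetic} already at $k=1$.

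The missing idea for the sharp constant is an interpolation that \emph{closes} \eqref{generalk} at a single level instead of chaining levels. By H\"older,
\[
\mathcal L_{\eta,0+,k-1}(f_\eps)(t)\leq C^{1/k}\left(\mathcal L_{\eta,0+,k}(f_\eps)(t)\right)^{(k-1)/k},
\]
with $C$ controlled by second moments, uniformly in $\eps$, $\eta$ and $k$. Substituted into \eqref{generalk}, this produces a superlinear (Riccati/Bernoulli-type) differential inequality for $y(t)=\mathcal L_{\eta,0+,k-1}(f_\eps)(t)$ alone,
\[
\frac{dy}{dt}\leq -k\,C^{-\frac{1}{k-1}}\,y^{\frac{k}{k-1}}+\frac{2}{\eta^{k-1}}\,\Lambda_{f_\eps,k+2}(t),
\]
whose self-improving absorption gives, through $M(t)=t^{k-1}y(t)$, the bound $M(t)\leq C+\frac{2T^{k-1}}{\eta^{k-1}}\int_0^T\Lambda_{f_\eps,k+2}(r)\,dr$ with $C$ independent of $k$, of $\eta$, and --- crucially --- of any initial value of $\mathcal L_{\eta,0+,k-1}$. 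This simultaneously repairs both defects: no initial bound beyond $k=0$ is ever used, and the constant does not degrade with $k$. After $\eps\rightarrow 0$ (the trace term vanishes by \eqref{lambdavanish}) and the monokinetic substitution, one obtains
\[
\int_0^T\int \ind_{x>y}\,(u(t,x)-u(t,y))_+^2\,\left(t\,\frac{(u(t,x)-u(t,y))_+}{x-y+\eta}\right)^{k-1}\rho(t,dx)\,\rho(t,dy)\,dt\leq C\,T
\]
uniformly in $k$, which forces the ratio to be at most $1$ on the support and hence the exact slope $1/t$ after $\eta\rightarrow 0$. Your second worry, about the atomic part of $\rho$ in the Oleinik step, is by contrast not a real issue: the $\ind_{x>0}$ versus $\ind_{x\geq 0}$ subtlety is dealt with once and for all inside the proof of Proposition \ref{functionalimpliesmonokinetic} (via the splitting of atoms under the dynamics), and in \eqref{ineqOleinik} the diagonal case $x=y$ is trivial once $u$ is single-valued $\rho$-a.e.
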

\begin{remark} \label{rkuniqueness}
	As already mentioned in the introduction,	it is known from \cite{BouchutJames:1999, HuangWang:2001} that there exists a unique solution $(\rho, u)$ to the pressureless Euler equations \eqref{sysSticky} (called the \emph{entropy} solution) under the so-called \emph{Oleinik condition} \eqref{ineqOleinik} for any $t>0$
and if the measure $\rho u^2$ weakly converges to $\rho_{in} u_{in}^2$ as $t$ goes to $0$. Therefore once $f$ is known in Theorem \ref{generalhydro} at some time $t_0$, it is necessarily unique after that time $t_0$. The only problem for uniqueness can occur at $t=0$. This can be remedied if the initial data is well prepared for example
\begin{equation}
f^0(x,v)=\rho^0(x)\,\delta(v-u^0(x)),\quad u^0\,\mbox{Lipschitz}.\label{wellprepared}
\end{equation}
\end{remark}

\begin{proof}
We divide it in distinct steps: First passing to the limit in $f_\eps$ and its moments. Then proving that $f$ is monokinetic which implies that $\rho,\;j$ solve the pressureless system \eqref{sysSticky} and finally obtain the \emph{Oleinik condition} \eqref{ineqOleinik}.

\medskip

\noindent{\bf Step 1: Extracting limits.} First of all, since the total mass is $1$ at any $t$, then the sequence $f_\eps$ is uniformly bounded in $L^\infty([0,\ T],\ M^1(\R^2))$. It is possible to extract a subsequence, still denoted $f_\eps$ for simplicity, that converges to some $f$ in the appropriate weak-* topology: For any $\phi\in L^1([0,\ T], C_c(\R^2))$,
\[
\int_0^T\int_{\R^2} \Phi(t,x,v)\,f_\eps(t,dx,dv)\,dt\longrightarrow \int_0^T\int_{\R^2} \Phi(t,x,v)\,f(t,dx,dv)\,dt.
\]
Since moments up to order at least $3$ of $f_\eps$ are uniformly bounded in $\eps$, then it is also possible to pass to the limit in moments of $f_\eps$ and
\[\begin{split}
&\rho_\eps=\int_\R f_\eps(t,x,dv)\rightarrow \rho=\int_\R f(t,x,dv), \quad j_\eps=\int_\R v\,f_\eps(t,x,dv)\rightarrow j=\int_\R v\,f(t,x,dv),\\
&E_\eps=\int_\R v^2\,f_\eps(t,x,dv)\rightarrow E=\int_\R v^2\,f(t,x,dv),
\end{split}\] 
in the weak-* topology of $L^\infty([0,\ T],\ M^1(\R))$.

Multiplying Eq. \eqref{basickinetic} by $|v|^2$ one finds that
\[
\sup_\eps \int_0^T\int_{\R^2} m_\eps(dt,dx,dv)\leq \sup_\eps\int_\R v^2\,f_\eps(t=0,x,dv)<\infty.
\]
Therefore one may further extract a converging subsequence $m_\eps\rightarrow m\geq 0$ in the weak-* topology of $M^1([0,\ T]\times(\R))$. 

This proves that $f$ and $m$ still solve \eqref{basickinetic}. From the bounded moments of $f$, one may integrate this system against $1$ first and $v^2$ second to find the system
\begin{equation}
\begin{split}
&\partial_t \rho+\partial_x j=0,\\
&\partial_t j+\partial_x E=0.
\end{split}\label{pressureless}
\end{equation}

\medskip

\noindent{\bf Step 2: $f$ is monokinetic.} We now apply Theorem \ref{thmBoundL} to $f_\eps$ for $k=1$ and find from \eqref{generalk} that
\[
\int_0^T \mathcal L_{\eta,0+,1}(f_\eps)(t)\,dt\leq \mathcal L_{\eta,0+,0}(f_\eps)(t=0)+2\,|\log \eta|\,\int_0^T\Lambda_{f_\eps,2}(t)\,dt. 
\]
This means in particular that for any $\mu>0$
\[
\int_0^T \mathcal L_{\eta,\mu,1}(f_\eps)(t)\,dt\leq \mathcal L_{\eta,0+,0}(f_\eps)(t=0)+2\,|\log \eta|\,\int_0^T\Lambda_{f_\eps,2}(t)\,dt. 
\]
We use Lemma \ref{basicproperty} on the sequence $f_\eps$ to obtain that for any $\mu>0$ and $\eta>0$
\[
\int_0^T \mathcal L_{\eta,\mu,1}(f_\eps)(t)\,dt\longrightarrow \int_0^T \mathcal L_{\eta,\mu,1}(f)(t)\,dt.
\]
By the assumptions of Theorem \ref{generalhydro} we also have that $\int_0^T\Lambda_{f_\eps,2}(t)\,dt\rightarrow 0$ and that $C:=\sup_{\eps,\eta}\mathcal L_{\eta,0+,0}(f_\eps)(t=0)<\infty$. Thus
\[
\int_0^T \mathcal L_{\eta,\mu,1}(f)(t)\,dt\leq C,
\]  
and in particular
\[
\sup_{\mu,\eta} \int_0^T \mathcal L_{\eta,\mu,1}(f)(t)\,dt<\infty.
\]
We may now apply Prop. \ref{functionalimpliesmonokinetic} which implies that $f$ is monokinetic, that is $f=\rho(t,x)\,\delta(v-u(t,x))$ while $u$ satisfies that for any $k$
\[
\sup_{[0,\ T]}\int_\R |u(t,x)|^{k}\,\rho(t,dx)<\infty.
\]
Therefore one automatically has that $j=\rho\,u$ and $E=\rho\,|u|^2$. From system \eqref{pressureless}, $\rho$ and $\rho\,u$ solve the pressureless gas dynamics \eqref{sysSticky}.

\medskip

\noindent{\bf Step 3: The Oleinik condition.} We only have to show that $u$ is semi-Lipschitz in the sense of \eqref{ineqOleinik}. Since all moments of $f_\eps$ are bounded, we may apply Theorem \ref{thmBoundL} to $f_\eps$ for any $k$ of which we repeat the conclusion
\begin{equation}
\mathcal L_{\eta,0+,k-1}(f_\eps)(t)+k\,\int_s^t \mathcal L_{\eta,0+,k}(f_\eps)(r)\,dr\leq \mathcal L_{\eta,0+,k-1}(f_\eps)(s)+\frac{2}{\eta^{k-1}}\,\int_s^t\Lambda_{f_\eps,k+2}(r)\,dr. \label{debutgronwall}
\end{equation}

Observe that by a simple H\"older inequality
\[\begin{split}
&\mathcal L_{\eta,0+,k-1}(f_\eps)(t)= \int_{\R^4} \ind_{x>y}\, \frac{(v-w)_+^{k-1}}{(x-y+\eta)^{k-1}}\,(v-w)_+^2\, f_\eps\,f_\eps'\\
&\leq \left(\int_{\R^4} \ind_{x>y}\, \frac{(v-w)_+^{k}}{(x-y+\eta)^{k}} \,(v-w)_+^2\, f_\eps\,f_\eps'\right)^{(k-1)/k}\,\left(\int_{\R^4} (v-w)_+^2 f_\eps\,f_\eps'\right)^{1/k}.\\
\end{split}\] 
Therefore since $\int |v|^2\,f_\eps(dx,dv)$ is uniformly bounded in $\eps$, for some uniform constant $C$ one has that
\[
\mathcal L_{\eta,0+,k-1}(f_\eps)(t)\leq C^{1/k}\,(\mathcal L_{\eta,0+,k}(f_\eps)(t))^{(k-1)/k},
\]
which from the inequality \eqref{debutgronwall} leads to for $a.e.\ s<t$
\[
\mathcal L_{\eta,0+,k-1}(f_\eps)(t)+k\,C^{-\frac1{k-1}}\,\int_s^t (\mathcal L_{\eta,0+,k-1}(f_\eps)(r))^{\frac{k}{k-1}}\,dr\leq  \mathcal L_{\eta,0+,k-1}(f_\eps)(s)+\frac{2}{\eta^{k-1}}\,\int_s^t\Lambda_{f_\eps,k+2}(r)\,dr.
\]
This is now a closed inequality on $\mathcal L_{\eta,0+,k-1}(f_\eps)(t)$.  In order to derive a bound in a simple manner, assume momentarily that $\Lambda_{f_\eps,k+2}$ is $L^\infty$ in time, or more precisely approximate it by such a bounded function. Then the inequality would imply that $\mathcal L_{\eta,0+,k-1}(f_\eps)$ is Lipschitz and could be rewritten in the more direct form
\[
\frac{d}{dt} \mathcal L_{\eta,0+,k-1}(f_\eps)(t)\leq -k\,C^{-\frac1{k-1}} (\mathcal L_{\eta,0+,k-1}(f_\eps)(r))^{\frac{k}{k-1}}+\frac{2}{\eta^{k-1}}\,\Lambda_{f_\eps,k+2}(t).
\]
Introduce the intermediary quantity $M(t)=t^{k-1}\, \mathcal L_{\eta,0+,k-1}(f_\eps)(t)$ which satisfies now
\[
\frac{dM}{dt}\leq (k-1)\,\frac{M-C^{-\frac1{k-1}}\,M^{1+\frac{1}{k-1}}}{t} +\frac{2\,t^{k-1}}{\eta^{k-1}}\,\Lambda_{f_\eps,k+2}(t).
\]
At a given point $t$, either $M\leq C$ or 
\[
\frac{dM}{dt}\leq \frac{2\,t^{k-1}}{\eta^{k-1}}\,\Lambda_{f_\eps,k+2}(t).
\]
Therefore obviously
\[
M(t)\leq C+\frac{2\,T^{k-1}}{\eta^{k-1}}\,\int_0^T \Lambda_{f_\eps,k+2}(r)\,dr.
\]
This final bound now only depends on the $L^1$ norm of $\Lambda_{f_\eps,2}$ (and thus is independent of the chosen approximation of $\Lambda_{f_\eps,k+2}$) leading to the inequality
\[
t^{k-1}\,\mathcal L_{\eta,0+,k-1}(f_\eps)(t)\leq C+\frac{2\,T^{k-1}}{\eta^{k-1}}\,\int_0^T \Lambda_{f_\eps,k+2}(r)\,dr.
\]
Integrating this inequality between  $0$ and $T$ and recalling that
$\mathcal L_{\eta,\mu,k-1}(f_\eps)(t)\leq \mathcal L_{\eta,0+,k-1}(f_\eps)(t)$, one obtains that for any $\mu>0$ and $\eta>0$, one has
\[
\int_0^T r^{k-1}\,\mathcal L_{\eta,\mu,k-1}(f_\eps)(r)\,dr\leq C\,T+\frac{2\,T^{k}}{\eta^{k-1}}\,\int_0^T \Lambda_{f_\eps,k+2}(r)\,dr. 
\]
Because of $r^{k-1}$ it is now possible to pass to the limit as $\eps \rightarrow 0$ by Lemma \ref{basicproperty}. Recall that from the assumption of Theorem \ref{generalhydro}, $\int_0^T \Lambda_{f_\eps,k+2}(r)\,dr\rightarrow 0$ to obtain
\[
\int_0^T r^{k-1}\,\mathcal L_{\eta,\mu,k-1}(f)(r)\,dr\leq C\,T.
\]
Take the supremum in $\mu$ to find from Lemma \ref{basicproperty} that
\[
\int_0^T r^{k-1}\,\mathcal L_{\eta,0+,k-1}(f)(r)\,dr\leq C\,T,
\]
or recalling the definition of $\mathcal L_{\eta,0+,k-1}(f)$ and the fact that $f$ is monokinetic
\[
\int_0^T\int_{\R^2} \ind_{x>y}\,(u(t,x)-u(t,y))_+^2\,\left(t\,\frac{(u(t,x)-u(t,y))_+}{(x-y+\eta)}\right)^{k-1}\,\rho(t,dx)\,\rho(t,dy)\,dt\leq C\,T. 
\]
For a fixed $\eta$, take the limit $k\rightarrow\infty$ in this inequality.
The only possibility for the left-hand side to remain bounded is that on the support of $\ind_{x>y}\,\rho(t,x)\,\rho(t,y)$, one has that
\[
t\,\frac{(u(t,x)-u(t,y))_+}{(x-y+\eta)}\leq 1.
\]
This is uniform in $\eta$ and thus passing finally to the limit $\eta\rightarrow 0$, one recovers the Oleinik bound \eqref{ineqOleinik}.
\end{proof}	  
		  
\subsection{Proof of Theorem \ref{mainresult}}
Let us start by checking that $f_\eps$ is a solution to Eq. \eqref{basickinetic}. Given that $f_\eps$ solves Eq. \eqref{eqBoltzEPS}, this is equivalent to showing that for any $\alpha$ and any $f$ the collision kernel $\mathcal Q_\alpha(f,f)$ can be represented as $-\partial_{vv} m$ for some non-negative measure $m$. 

Thus we have to show that
\[
\int_\R \mathcal Q_\alpha(f,f)\,dv=0,\quad \int_\R v\,\mathcal Q_\alpha(f,f)\,dv=0,
\]
which is just the conservation of mass and momentum, 
and that for any $\psi(v)$ with $\partial_{vv}\psi\geq 0$, that is $\psi$ convex,
\[
\int_\R\psi(v)\,\mathcal Q_\alpha(f,f)\,dv\leq 0.
\]
This is a consequence of the weak formulation of the operator \eqref{defBoltzweak}, which reads as we recall for any smooth test function $\psi$
\begin{equation}
\int_\R\psi(v)\,\mathcal Q_\alpha(f,f)\,dv=\frac{1}{2}\int_{\R^2}|v-v_*|\,f(v_*)\,f(v)\,(\psi(v')+\psi(v'_*)-\psi(v_*)-\psi(v))\,dv\,dv_*.
\end{equation}
Now rewriting  $v'$ and $v'_*$ 
\[\begin{split}
\psi(v')+\psi(v'_*)-\psi(v_*)-\psi(v)&=\psi\left(\frac{1\!+\!\alpha}2\,v +\frac{1\!-\!\alpha}{2}\,v_*\right)+\psi\left(\frac{1\!-\!\alpha}2\,v +\frac{1\!+\!\alpha}{2}\,v_*\right)-\psi(v_*)-\psi(v)\\
&\leq 0,
\end{split}
\]
if $\psi$ convex for $\alpha<1$. 

This implies that propagating moments is easy
\[
\frac{d}{dt}\int_{\R^2} |v|^k\,f_\eps\,dx\,dv=\frac1\eps\int_{\R^2} |v|^k\,Q_\alpha(f_\eps,f_\eps)\,dx\,dv\leq 0.
\]
This immediately proves that
\[
\sup_\eps \sup_{t\in[0,\ T]} \int_{\R^2} |v|^k f_\eps(t,x,v)\,dx\,dv\leq
 \sup_\eps \int_{\R^2} |v|^k f_\eps^0(x,v)\,dx\,dv<\infty.
\]
Next note that
\[
\frac{d}{dt}\int_{\R^2} |x|^2\,f_\eps(t,x,v)\,dx\,dv=2\int_{\R^2} x\cdot v\,f_\eps(t,x,v)\,dx\,dv\leq \int_{\R^2} (|x|^2+|v|^2)\,f_\eps(t,x,v)\,dx\,dv,
\]
so that
\[
\sup_\eps \sup_{t\in[0,\ T]} \int_{\R^2} |x|^2 f_\eps(t,x,v)\,dx\,dv\leq
 e^T\,\sup_\eps \int_{\R^2} (|x|^2+|v|^2)\, f_\eps^0(x,v)\,dx\,dv<\infty.
\]
In addition the dissipation term from the $v$-moments actually leads to a control on  $\Lambda_{f_\eps,k}(t)$. Since we assumed that $f_\eps\in L^\infty([0,\ T],\ L^p(\R^2))$ for $p>2$ and every moment of $f_\eps$ is bounded then for any fixed $v$, then
\[
\int_{\R} (v-w)_+^k\,f_\eps(t,x,w)\,dw
\]
is bounded in $L^2([0,\ T]\times\R)$ and by standard approximation by convolution
\[
\int_{\R^2} \frac{\ind_{x<y<\delta}}{\delta} (v-w)_+^k\,f_\eps(t,y,w)\,dy\,dw-\int_{\R} (v-w)_+^k\,f_\eps(t,x,w)\,dw\longrightarrow 0,
\]
in $L^2([0,\ T]\times\R)$ as $\delta\rightarrow 0$. Of course this convergence only holds for a fixed $\eps$ (and is not in principle uniform in $\eps$). But for a fixed $\eps$, it now directly implies that for $a.e.\;t$
\[
\Lambda_{f_\eps,k}=\int_{\R^3} (v-w)_+^k\,f_\eps(t,x,v)\,f_\eps(t,x,w)\,dx\,dv\,dw.
\]
As suggested in the introduction for the energy, $k=2$, this term is then controlled by the dissipation of the moment of order $k$. More precisely if $\psi(v)=|v|^k$ then for some $C_k>0$ 
\[\begin{split}
&\psi(v')+\psi(v'_*)-\psi(v_*)-\psi(v)\geq \frac{|v-v_*|^k}{C_k}.
\end{split}
\]
Therefore 
\[
\int_0^T\Lambda_{f_\eps,k}\leq C_k\,\eps\int_0^T\int_{\R^2} \mathcal Q_\alpha(f_\eps,f_\eps)\,dx\,dv\,dt\leq C_k\,\eps\,\int_{\R^2} |v|^k\,f_\eps^0(x,v)\,dx\,dv\longrightarrow 0,
\]
as $\eps\rightarrow 0$.

The last assumptions of Theorem \ref{generalhydro} to check is a bound $\mathcal L_{\eta,\mu,0}(f_\eps)(t=0)$ uniformly in $\eps,\;\eta,\;\mu$.
This follows from the uniform $L^p$ bound on $f_\eps^0$ through a straightforward H\"older estimate to compensate for the $\log$ singularity. Denote $q=\frac{1+p}{2}$ and $q^*$ s.t. $1/q^*=1-1/q$
\[\begin{split}
\mathcal L_{\eta,\mu,0}(f_\eps)(t=0)&=-\int_{\R^4} \chi_\mu(x-y)\,|v-w|_+^2\,\log (x-y+\eta)_-\, f_\eps^0(x,v)\,f_\eps^0(y,w)\,dx\,dy\,dv\,dw\\
&\leq 2\,\int_{\R^4} \ind_{|x-y|\leq 2}\,|\log |x-y||\,|v|^2\,f_\eps^0(x,v)\,f_\eps^0(y,w))\,dx\,dy\,dv\,dw\\
&\leq 2\,\int_{\R^2} |v|^2 f_\eps^0(x,v)\,\left(\int_{x-2}^{x+2}\int_\R \frac{|\log |x-y||^{q^*}}{1+|w|^2}\,dy\,dw\right)^{1/q^*}\\
&\qquad\qquad\qquad\qquad\qquad\left(\int_{\R^2} (1+|w|^2)\, |f_\eps^0(y,w)|^q\,dy\,dw \right)^{1/q}\,dx\,dv\\
&\leq C\,\int_{\R^2} |v|^2 f_\eps^0(x,v)\,dx\,dv\,\left(\int_{\R^2} (1+|w|^2)\, |f_\eps^0(y,w)|^q\,dy\,dw \right)^{1/q},
\end{split}
\]
since $|\log x|^l$ is integrable at $0$ for any $l>0$. Finally by Cauchy-Schwartz
\[
\int_{\R^2} (1+|w|^2)\, |f_\eps^0(y,w)|^q\,dy\,dw\leq \left(\int_{\R^2} (1+|w|^2)^2\, f_\eps^0(y,w)\,dy\,dw\right)^{1/2}\,\left(\int_{\R^2} |f_\eps^0(y,w)|^p\,dy\,dw\right)^{1/2},
\]
which gives
\[
\mathcal L_{\eta,\mu,0}(f_\eps)(t=0)\leq C\,\|f_\eps^0\|_{L^p}^{p/2}\,\left(1+\int_{\R^2} |v|^4\,f_\eps^0(x,v)\,dx\,dv\right)^{3/2},
\]
and the uniform bound.

Since the sequence $f_\eps$ satisfies all the assumptions of Theorem \ref{generalhydro}, its conclusions apply thus proving Theorem \ref{mainresult}.
  \bibliographystyle{acm}
  \bibliography{biblioJR}

\begin{thebibliography}{10}

\bibitem{Alonso:2009}
{\sc Alonso, R.~J.}
\newblock {Existence of Global Solutions to the Cauchy Problem for the
  Inelastic Boltzmann Equation with Near-vacuum Data}.
\newblock {\em Indiana Univ. Math. J. 58}, 3 (2009), 999--1022.

\bibitem{AlonsoLods:2010}
{\sc Alonso, R.~J., and Lods, B.}
\newblock {Free Cooling and High-Energy Tails of Granular Gases with Variable
  Restitution Coefficient}.
\newblock {\em SIAM J. Math. Anal. 42}, 6 (2010), 2499--2538.

\bibitem{AlonsoLods:2013}
{\sc Alonso, R.~J., and Lods, B.}
\newblock {Two proofs of Haff’s law for dissipative gases: the use of entropy
  and the weakly inelastic regime}.
\newblock {\em Journal of Mathematical Analysis and Applications 397}, 1
  (2013), 260--275.

\bibitem{BenedettoCagliotiGolsePulvirenti:1999}
{\sc Benedetto, D., Caglioti, E., Golse, F., and Pulvirenti, M.}
\newblock {A hydrodynamic model arising in the context of granular media}.
\newblock {\em Computers {\&} Mathematics with Applications 38}, 7-8 (oct
  1999), 121--131.

\bibitem{BenedettoCagliotiPulvirenti:97}
{\sc Benedetto, D., Caglioti, E., and Pulvirenti, M.}
\newblock {A One-dimensional Boltzmann Equation with Inelastic Collisions}.
\newblock {\em Rend. Sem. Mat. Fis. Milano LXVII\/} (1997), 169--179.

\bibitem{BenedettoPulvirenti:2002}
{\sc Benedetto, D., and Pulvirenti, M.}
\newblock {On the one-dimensional Boltzmann equation for granular flows}.
\newblock {\em M2AN Math. Model. Numer. Anal. 35}, 5 (Apr. 2002), 899--905.

\bibitem{BiCrPa:2006}
{\sc Biryuk, A., Craig, W., and Panferov, V.}
\newblock Strong solutions of the {B}oltzmann equation in one spatial
  dimension.
\newblock {\em C. R. Math. Acad. Sci. Paris 342}, 11 (2006), 843--848.

\bibitem{bony:1987}
{\sc Bony, J.-M.}
\newblock Solutions globales born\'ees pour les mod\`eles discrets de
  l'\'equation de {B}oltzmann, en dimension {$1$} d'espace.
\newblock In {\em Journ\'ees ``\'{E}quations aux deriv\'ees partielles''
  ({S}aint {J}ean de {M}onts, 1987)}. \'Ecole Polytechnique, Palaiseau, 1987.
\newblock Exp.\ No.\ XVI, 10 pp.

\bibitem{BouchutJames:1999}
{\sc Bouchut, F., and James, F.}
\newblock Duality solutions for pressureless gases, monotone scalar
  conservation laws, and uniqueness.
\newblock {\em Comm. Partial Diff. Eq. 24}, 11-12 (1999), 2173--2189.

\bibitem{boudin:2000}
{\sc Boudin, L.}
\newblock {A Solution with Bounded Expansion Rate to the Model of Viscous
  Pressureless Gases}.
\newblock {\em SIAM Journal on Mathematical Analysis 32}, 1 (2000), 172--193.

\bibitem{BrenierGrenier:1998}
{\sc Brenier, Y., and Grenier, E.}
\newblock Sticky particles and scalar conservation laws.
\newblock {\em SIAM J. Numer. Anal. 35}, 6 (1998), 2317--2328 (electronic).

\bibitem{brilliantov:2004}
{\sc Brilliantov, N., and P\"{o}schel, T.}
\newblock {\em {Kinetic {T}heory of {G}ranular {G}ases}}.
\newblock Oxford University Press, USA, 2004.

\bibitem{Cercignani:1992}
{\sc Cercignani, C.}
\newblock A remarkable estimate for the solutions of the {B}oltzmann equation.
\newblock {\em Appl. Math. Lett. 5}, 5 (1992), 59--62.

\bibitem{CIP:94}
{\sc Cercignani, C., Illner, R., and Pulvirenti, M.}
\newblock {\em The Mathematical Theory of Dilute Gases}, vol.~106 of {\em
  Applied Mathematical Sciences}.
\newblock Springer-Verlag, New York, 1994.

\bibitem{ChertockKurganovRykov:2007}
{\sc Chertock, A., Kurganov, A., and Rykov, Y.}
\newblock A new sticky particle method for pressureless gas dynamics.
\newblock {\em SIAM J. Numer. Anal. 45}, 6 (2007), 2408---2441 (electronic).

\bibitem{ERykovSinai:1996}
{\sc E, W., Rykov, Y.~G., and Sinai, Y.~G.}
\newblock Generalized variational principles, global weak solutions and
  behavior with random initial data for systems of conservation laws arising in
  adhesion particle dynamics.
\newblock {\em Commun. Math. Phys. 177}, 2 (1996), 349--380.

\bibitem{GolseStRaymond2}
{\sc Golse, F., and Saint-Raymond, L.}
\newblock {The Navier-Stokes limit of the Boltzmann equation for bounded
  collision kernels}.
\newblock {\em Invent. Math. 155}, 1 (2004), 81--161.

\bibitem{GolseStRaymond1}
{\sc Golse, F., and Saint-Raymond, L.}
\newblock Hydrodynamic limits for the {B}oltzmann equation.
\newblock {\em Riv. Mat. Univ. Parma 4}, 7 (2005), 1--144.

\bibitem{haff:1983}
{\sc Haff, P.}
\newblock {Grain flow as a fluid-mechanical phenomenon}.
\newblock {\em J. Fluid Mech. 134\/} (1983), 401--30.

\bibitem{HuangWang:2001}
{\sc Huang, F., and Wang, Z.}
\newblock {Well Posedness for Pressureless Flow}.
\newblock {\em Communications in Mathematical Physics 222}, 1 (Aug. 2001),
  117--146.

\bibitem{KangVasseur:2014}
{\sc Kang, M.-J., and Vasseur, A.}
\newblock {Asymptotic Analysis of Vlasov-type Equation Under Strong Local
  Alignment Regime}.
\newblock preprint arXiv 1412.3119.

\bibitem{LionsPerTad}
{\sc Lions, P., Perthame, B., and Tadmor, E.}
\newblock A kinetic formulation of multidimensional scalar conservation laws
  and related questions.
\newblock {\em J. Amer. Math. Soc. 7\/} (1994), 169--191.

\bibitem{LionsPerTad2}
{\sc Lions, P., Perthame, B., and Tadmor, E.}
\newblock Kinetic formulation of the isentropic gas dynamics and $p$-systems.
\newblock {\em Comm. Math. Phys. 163\/} (1994), 415--431.

\bibitem{MischlerMouhot:20062}
{\sc Mischler, S., and Mouhot, C.}
\newblock {Cooling process for inelastic Boltzmann equations for hard spheres,
  Part II: Self-similar solutions and tail behavior}.
\newblock {\em J. Statist. Phys. 124}, 2 (2006), 703--746.

\bibitem{Oleinik}
{\sc Oleinik, O.}
\newblock On {C}auchy’s problem for nonlinear equations in a class of
  discontinuous functions.
\newblock {\em Doklady Akad. Nauk SSSR (N.S.) 95\/} (1954), 451--454.

\bibitem{Perthame}
{\sc Perthame, B.}
\newblock {\em Kinetic Formulations of Conservation Laws}.
\newblock Oxford series in mathematics and its applications. Oxford University
  Press, 2002.

\bibitem{rey:2011}
{\sc Rey, T.}
\newblock {Blow Up Analysis for Anomalous Granular Gases}.
\newblock {\em SIAM J. Math. Anal. 44}, 3 (2012), 1544--1561.

\bibitem{StRaymond}
{\sc Saint-Raymond, L.}
\newblock {From the Boltzmann BGK equation to the Navier-Stokes system}.
\newblock {\em Ann. Sci. Ecole Norm. Sup. 36}, 2 (2003), 271--317.

\bibitem{SilkSzalayZeldovich:1983}
{\sc Silk, J., Szalay, A., and Zeldovich, Y.~B.}
\newblock {Large-scale structure of the universe}.
\newblock {\em Scientific American 249\/} (1983), 72--80.

\bibitem{Toscani:2008}
{\sc Toscani, G.}
\newblock {\em Mathematical Models of Granular Matter}.
\newblock Springer Berlin Heidelberg, Berlin, Heidelberg, 2008,
  ch.~Hydrodynamics from the Dissipative Boltzmann Equation, pp.~59--75.

\bibitem{Tristani:2013}
{\sc Tristani, I.}
\newblock {Boltzmann Equation for Granular Media with Thermal Forces in a
  Weakly Inhomogeneous Setting}.
\newblock {\em J, Funct. Anal.\/} (2015).
\newblock In Press.

\bibitem{Villani:2006}
{\sc Villani, C.}
\newblock {Mathematics of Granular Materials}.
\newblock {\em J. Statist. Phys. 124}, 2 (2006), 781--822.

\bibitem{Wu:2009}
{\sc Wu, Z.}
\newblock {$L^1$ and BV-type stability of the inelastic Boltzmann equation near
  vacuum}.
\newblock {\em Continuum Mechanics and Thermodynamics 22}, 3 (Nov. 2009),
  239--249.

\end{thebibliography}
  
\end{document}